\definecolor{carminepink}{rgb}{0.92, 0.3, 0.26}
\declaretheorem[parent=section]{theorem}
\declaretheorem[sibling=theorem]{proposition}
\declaretheorem[sibling=theorem]{lemma}
\declaretheorem[sibling=theorem]{corollary}
\declaretheorem[sibling=theorem,style=remark]{remark}
\declaretheorem[sibling=theorem,style=definition]{definition}
\newtheorem*{acknow*}{Acknowledgments}
\DeclareSymbolFont{bbold}{U}{bbold}{m}{n}
\DeclareSymbolFontAlphabet{\mathbbold}{bbold}
\providecommand{\xset}[1]{x_1,\ldots, x_{#1}}
\newcommand{\reals}{\mathbb{R}}					          
\newcommand{\Ex}{\mathbb{E}}				  
\newcommand{\E}{\mathbb{E}}            
\newcommand{\Prob}{\mathbb{P}}    
\newcommand{\sphere}{\mathbb{S}^d}				         
\renewcommand{\frak}{\mathfrak}   
\newcommand{\betac}{\beta_{j,k}}                         
\newcommand{\barg}{\frac{\ell_q}{B^{2j}}}                          
\newcommand{\M}{\mathcal{M}}						         
\newcommand{\cubew}{\lambda_{j,k}}
\newcommand{\cubep}{\xi_{j,k}}
\newcommand{\ind}[1]{\mathbbold{1}{\bbra{#1}}}
\newcommand{\Lambdaj}{\Lambda_j}		
\newcommand{\Ltwo}{{L^2\bra{\M}}}
\newcommand{\Lp}{{L^p\bra{\M}}}
\newcommand{\Lr}{{L^r\bra{\M}}}
\newcommand{\Lq}{{L^q\bra{\M}}}
\newcommand{\besov}{\mathcal{B}_{r,q}^s}
\definecolor{electricultramarine}{rgb}{0.25, 0.0, 1.0}
\newcommand{\eigen}{\ll}
\providecommand{\cc}[1]{\overline{#1}}
\providecommand{\abs}[1]{\left\vert#1\right\vert}			
\providecommand{\norm}[1]{\left\Vert#1\right\Vert}			
\providecommand{\bra}[1]{\left(#1\right)}			      
\providecommand{\sbra}[1]{\left[#1\right]}			    
\providecommand{\bbra}[1]{\left\{#1\right\}}			      
\providecommand{\needlet}[1]{\psi_{j,k}\left(#1\right)}	
\providecommand{\bfun}[1]{b\left(#1\right)}				  
\newcommand{\sumk}{\sum_{k=1}^{K_j}}   
\newcommand{\sumq}{\sum_{q \in \Lambdaj} }
\providecommand{\Mint}[1]{\int_{\M}{#1}}		  
\renewcommand{\ll}{\ell}							         
\renewcommand{\l}{\left}                                      
\renewcommand{\r}{\right}                                    
\newcommand{\Lm}{\mathcal{L}_{\M}}
\providecommand{\needlettensor}[2]{\psi_{j,k}^{\otimes #1}\left(#2\right)}	
\newcommand{\kerdef}{\sum_{k=1}^{K_j}\needlettensor{n}{\xset{n}}}
\newcommand{\Var}[1]{\operatorname{Var}\left(#1\right) }
\begin{document}
\bibliographystyle{alpha}

\author{Solesne Bourguin}
\address{Boston University, Department of Mathematics and Statistics, 111 Cummington Mall, Boston, MA 02215, USA}
\email{solesne.bourguin@gmail.com}
\author{Claudio Durastanti}
\address{Ruhr  University  Bochum,  Faculty  of  Mathematics,  D-44780  Bochum,  Germany}
\email{claudio.durastanti@gmail.com}
\thanks{C. Durastanti is supported by the German DFG grant \textit{GRK} 2131}
  
\title[On high-frequency limits of $U$-statistics over compact manifolds]{On high-frequency limits of $U$-statistics in Besov spaces over compact manifolds} 
\begin{abstract}
In this paper, quantitative bounds in high-frequency central limit theorems are derived for Poisson based $U$-statistics of arbitrary degree built by means of wavelet coefficients over compact Riemannian manifolds. The wavelets considered here are the so-called needlets, characterized by strong concentration properties and by an exact reconstruction formula. Furthermore, we consider Poisson point processes over the manifold such that the density function associated to its control  measure lives in a Besov space. The main findings of this paper include new rates of convergence that depend strongly on the degree of regularity of the control measure of the underlying Poisson point process, providing a refined understanding of the connection between regularity and speed of convergence in this framework.
\end{abstract}

\subjclass[2010]{60B05,60F05,60G57,62E20.}
\keywords{$U$-Statistics, Poisson random measures, High-frequency limit theorems, Wavelets, Compact Riemannian manifolds, Besov spaces, Stein-Malliavin method.}
\maketitle

\section{Introduction}\label{sec:intro} 

\subsection{Motivations and overview of the literature}
Poisson-based $U$-statistics are a central tool in mathematical statistics and stochastic geometry and have been the object of many recent investigations and developments, in particular in combination with stochastic calculus with respect to Poisson random measures and Malliavin calculus (see for instance, \cite{BouPec, lrp, lesmathias}). More recently, Poisson based $U$-statistics have been studied by means of wavelet methods, yielding new types of results, especially in the high-frequency limit (see, e.g., \cite{bdmp, bd1}). Wavelet systems have drawn a lot of interest, both from a theoretical as well as an applied point of view, more specifically in astrostatistics, where the main motivations for the results presented here originate. In this paper, we will focus on a second-generation class of wavelets, namely, the so-called needlets.  Needlet frames were originally introduced over the $d$-dimensional sphere in \cite{npw2,npw1}, and then have been generalized to different settings such has spin fiber bundles (see, for example, \cite{gelmar,gelmar2010}) and compact Riemannian manifolds (see, e.g., \cite{gm2,gelpes, knp,pesenson}). The concentration property in the spatial domain of the needlets (see \eqref{locapropofneedlets}) allows one to focus on any subset of a given manifold without having to take into account the entire structure. Some of the most remarkable statistical applications are discussed and studied in \cite{bkmpAoS,bkmpAoSb,cammar,dlm}.
\\~\\
The results obtained in \cite{bdmp,bd1} deal with Poisson $U$-statistics of order one and two, for which quantitative central limit theorem are derived in the high-frequency limit.  In both the linear and quadratic regimes, the control measure of the Poisson random measure was taken to be absolutely continuous with respect to the Lebesgue measure with a positive bounded density bounded away from zero. In this paper, we consider much more general $U$-statistics of arbitrary order, based on a Poisson random measure on a general compact manifold $\M$ of arbitrary dimension $d$ with an absolutely continuous control measure based on a density assumed to live in a given Besov space $\besov$ (a detailed introduction to Besov spaces and the interpretation of their parameters is given in Subsection \ref{sub:besov}). In particular, this assumption allows us to provide a fine analysis of the asymptotic behavior of these $U$-statistics in terms of the regularity of the density of the control measure. To our knowledge, these are the first results of this type in this framework and refine considerably not only the study of the asymptotics of such objects, but also the quantitative bounds one can obtain, explicitly exhibiting the relationship between speed of convergence and regularity of the control measure.
\\~\\
More concretely, let $\bbra{\psi_{j,k}\colon j \geq 0,\ k=1,\ldots, K_j}$ be a needlet frame with scale parameter $B$ over a compact manifold $\M$, where $K_j$ is the total number of needlets corresponding to a given resolution level $j$ (see Subsection \ref{sub:needlet} for a self-contained introduction to needlet frames on manifolds). In view of the concentration property in the spatial domain enjoyed by the needlet frame -- see \eqref{locapropofneedlets} -- needlets are neglectable outside of a subregion of $\M$, called pixel, of size proportional to $B^{-jd}$. Assume $N_t$ denotes a Poisson random measure on the manifold $\M$ with control measure $\mu_t(dx) := R_t f(x)dx$, where $R_t >0$ stands for the expected number of observations at time $t$ and where $f \in \besov$. By a slight abuse of notation, we also denote by $N_{t}$ the support of the Poisson random measure, and for any $p\geq 2$, $N_{t,\neq}^{p}$ will stand for the intersection of $N_{t}^{p}$ with the complement of the diagonal sets of $\reals^p$. 
\\~\\
The main objects under consideration here are defined as follows. Fix a natural number $n \geq 1$ and for any $j \geq 0$, define
\begin{equation*}
U_j\bra{t} = \sum_{\bra{x_1, \ldots, x_n} \in N_{t,\neq}^{n}}h_j\bra{x_1, \ldots, x_n},
\end{equation*}
where the kernel $h_j$ is the symmetric function given by 
\begin{equation*}
h_j\bra{x_1, \ldots ,x_n} := \sum_{k=1}^{K_j}\psi_{j,k}^{\otimes n}\bra{x_1, \ldots ,x_n}.
\end{equation*}
Furthermore, denote by $\sigma_j^2$ the variance of $U_j(t)$ and write $\widetilde{U}_{j}(t) := U_j(t)/\sigma_j^2$ for the normalized version of $U_j(t)$. 
\\~\\
\noindent The goal of this paper is to establish quantitative bounds for $U$-statistics of arbitrary degree $p\in \mathbb{N}$ built on wavelets coefficients evaluated on Poisson random fields taking values on a compact Riemannian manifold $\M$ of dimension $d \in \mathbb{N}$. Originally presented by W. Hoeffding in \cite{hoeffding48}, $U$-statistics have been since then extensively studied in the literature, see for instance the textbooks \cite{lee, vandervaart} for details and discussions. The use of these statistics is typically motivated by a large set of standard problems in statistical inference, as well as in the field of stochastic geometry. As mentioned in \cite{vandervaart}, interresting examples are test statistics such as Wilcoxon or Kendall's $\tau$ or Cram\'er statistic. We are interested in the so-called high-frequency limit, where the scale (i.e., the resolution level of the wavelet system) under investigation and the number of tests to be considered depend on the size of the observational sample. A central tool in the proof of our main results is the so-called Stein-Malliavin method for Poisson point processes, introduced in the seminal papers \cite{PSTU,PecZheng}. 
\\~\\
The so-called Stein-Malliavin method was initially introduced in \cite{nourdinpeccati} to establish Berry-Ess\'een bounds for functionals of Gaussian fields: it combines the Malliavin calculus of variations with Stein's method for probabilistic approximations. Since then, these techniques have become increasingly popular and have been extended to the framework of Poisson random measures (see, e.g., \cite{BouPec,PSTU,PecZheng}), as well as to the framework of spectral theory of general Markov diffusion generators (see, e.g., \cite{ledoux}). Quantitative central limit theorems for $U$-statistics based on Poisson point processes were extensively studied in \cite{lesmathias}, where the authors derived a Wiener-It\^o chaos expansion for Poisson based $U$-statistics. More recent results concerning the application of the Stein-Malliavin method in order to prove central limit theorems for needlet-based linear and nonlinear statistics were recently used by \cite{durastanti4,dmp} and \cite{bdmp,bd1}, respectively.

\subsection{Framework and main results}
\noindent $U$-statistics can be introduced as follows. Given a measurable space $\left\{X, \mathfrak{X}\right\}$, let $\left\lbrace X_1,\ldots,X_{\nu} \right\rbrace $ be a collection of independent and identically distributed real-valued random variables over $\left\{X, \mathfrak{X}\right\}$, distributed according to some law $\Prob$: a parameter $\theta$, characterizing the law $\Prob$, is called estimable (or regular, following Hoeffding) of degree $m$, if there exists an real-valued measurable function $h=h\l(x_1,\ldots,x_m\r)$ such that $h\bra{X_1,\ldots,X_m}$ is an unbiased estimator of $\theta$, i.e.,
\begin{equation*}
	\Ex\bra{h\bra{X_1,\ldots,X_m}}=\theta.
\end{equation*}
Without loss of generality, $h$ can be assumed to be symmetric. Indeed, if $g$ is an unbiased estimator for $\theta$ of degree $m$ that is not symmetric, its symmetric version can be easily constructed as follows
\begin{equation*}
h\bra{x_1,\ldots,x_m}=\frac{1}{m!} \sum_{\left(\pi_1\ldots,\pi_m\right) \in \frak{S}_m} g\bra{x_{\pi_1},\ldots,x_{\pi_m}},
\end{equation*} 
where $\frak{S}_m$ is the symmetric group over $m$ elements. Given a sample of observations $\bbra{X_1,\ldots, X_{\nu}}$ of size $\nu>m$, a $U$-statistic with kernel $h$ is given by
\begin{equation*}
	U_m = \frac{1}{\binom{\nu}{m}} \sum_{\bra{i_1,\ldots,i_m}\in\Sigma_{m,\nu}} h\bra{X_{i_1},\ldots,X_{i_m}},
\end{equation*}
where $\Sigma_{m,\nu}$ is the set of all the $\binom{\nu}{m}$ combinations of $m$ integers $i_1< \ldots <i_m$ chosen from $\bbra{1,\ldots,\nu}$. 
\\~\\
\noindent In our framework, we consider a generic compact Riemannian manifold $\bra{\M,g}$ and we denote with $dx$ the Lebesgue measure over $\M$. Take a Poisson process over $\M$, with intensity measure $\mu_t$. In what follows, we assume $\mu_t=R_t f\bra{x} dx$, where $R_t>0$ and $f$ is a density function over $\M$: a complete description of this setting is given in Subsection \ref{sub:steinmalliavin}, cf. also, for instance, \cite{PSTU,PecZheng}. Let us moreover denote by $\bbra{\psi_{j,k}\colon j \geq 0,\ k=1,\ldots, K_j}$ the set of needlets built over $\M$, associated to a scale parameter $B$. $j>0$ is the so-called resolution level, while the index $k=1,\ldots,K_j$ identifies the location of the pixel related to $\psi_{j,k}$ along $\M$. As explained in Subsection \ref{sub:needlet}, for any $j>0$, the area of each pixel, labeled by $\cubew$, and $K_j$ are chosen to be proportional to $B^{-dj}$ and $B^dj$, respectively. Moreover, let $\betac=\betac\bra{f}$ be the corresponding needlet coefficient, containing information on $f$, determined by the inner product 
\begin{equation*}
\betac = \int_{\M}\needlet{x}f(x)dx.
\end{equation*}
Moreover, assume that $f$ belongs to the so-called Besov space $\besov$, which describes the regularity properties of the function $f$ and consequently of the associated wavelet coefficients appearing in the decomposition of $f$.
 Further details concerning the construction and the properties of needlets and needlet coefficients over compact Riemannian manifolds will be given in Subsection \ref{sub:needlet}, see also \cite{gelpes,knp,pesenson}, while Besov spaces will be discussed in Subsection \ref{sub:besov}, cf. also \cite{gm3}. Let us introduce the following notation: for any $\alpha \colon \M \mapsto \reals$, we define the tensor $\alpha^{\otimes n}: \M^n \mapsto \reals$ as  
\begin{equation*}
\alpha^{\otimes n}\bra{\xset{n}}=\prod_{i=1}^{n} \alpha\bra{x_i},
\end{equation*}
where $x_i \in \M$ for $i=1,\ldots n$. Using this notation, let us introduce the kernels
\begin{equation}
\label{eqn:kernel}
h_j\bra{\xset{n}} = \kerdef,
\end{equation}
and their reduced versions, for $p\in \mathbb{N}$, $1\leq p<n$,
\begin{eqnarray}
\label{eqn:kernelred}
h_{j,p}\bra{\xset{p}} &=& \binom{n}{p}\int_{\M^{n-p}}h_j\bra{\xset{n}}  \mu_t^{\otimes (n-p)}\bra{dx_{p+1},\ldots,dx_{n}} \nonumber \\
&=& \sumk \binom{n}{p}R_t^{n-p}\bra{\Mint{\needlet{x}f\bra{x}dx}}^{n-p} \needlettensor{p}{\xset{p}} \nonumber \\
&=& \sumk \binom{n}{p}R_t^{n-p}\betac^{n-p} \needlettensor{p}{\xset{p}},
\end{eqnarray}
taking values respectively over $\M^n$ and $\M^p$. Observe that 
\begin{equation}
\label{defvariance}
\sigma_j^2 := \Var{\sum_{p=1}^{n}I_{p}\left(h_{j,p} \right)} = \sum_{p=1}^{n}\E\left(I_{p}\left(h_{j,p} \right)^2 \right) = \sum_{p=1}^{n}p! R_{t}^{p} \norm{h_{j,p}}_{L^{2}\left( \mu^p\right) }^{2}.
\end{equation}
For all $1 \leq p \leq n$, define $\tilde{h}_{j,p}\bra{\xset{n}} = \sigma_j^{-1}h_{j,p}\bra{\xset{n}}$. Observe that, by construction, $$\E\left( \sum_{p=1}^{n}I_{p}\left(\tilde{h}_{j,p} \right)\right) = 0, \quad \Var{\sum_{p=1}^{n}I_{p}\left(\tilde{h}_{j,p} \right)} = 1.$$ We will study the convergence in law of the random variable 
\begin{equation}
\label{randomtostudy}
\tilde{U}_j := \sum_{p=1}^{n}I_{p}\left(\tilde{h}_{j,p} \right).
\end{equation}
Following \cite{lesmathias}, cf. also Definition \ref{def:ustat}, $\tilde{U}_j$ is a $U$-statistic in the framework of Poisson random measures, see Subsection \ref{sub:steinmalliavin} for further details. 
\begin{remark}
A key aspect of the upcoming analysis is to obtain an asymptotic equivalent of the variance of \eqref{randomtostudy}, namely,  $\sigma_j^2$, or at least an asymptotic lower bound. At first glance, it seems like many possible cases could arise, depending on which chaoses dominate. For instance, for a $U$-statistic of order 10, the chaoses of order 3 and 7 could be the dominating ones, or just the first chaos, depending on the structure of the kernel of the $U$-statistic. It turns out, as proven in Proposition \ref{lowerboundvarianceasymp} (see also Remark \ref{remarkafterproofofpropchaoses}), that only three cases can arise: either the first chaos dominates, or the last chaos dominates, or they must all be asymptotically equivalent. This surprising phenomenon was quite unexpected and contributes to the general understanding of $U$-statistics and, although crucial in our proofs, is hence of independent interest of the rest of the paper. An (rather hidden) illustration of this fact already appears in \cite{bdmp} where $U$-statistics of order two are studied: it was determined that two cases could arise, namely, either the first chaos dominates or the second chaos dominates. In this setting, the second chaos being also the last one, this is in line with the above principle. The third case, although not mentioned in \cite{bdmp} (as it had not been identified), where all chaoses are asymptotically equivalent can be included either in the first chaos domination case or the second chaos domination case. 
\end{remark}
\noindent Finally, let us introduce the following notation: let $\left\lbrace x_j \colon j \geq 0\right\rbrace $ and $\left\lbrace y_j \colon j \geq 0\right\rbrace $ be two positive real-valued sequences and let $C$ denote a generic positive constant. 
\begin{itemize}
\item[--] $x_j \sim y_j \quad \Leftrightarrow \quad \frac{x_j}{y_j} = O(1)$.
\item[--] $x_j \lesssim y_j \quad \Leftrightarrow \quad \underset{j \rightarrow \infty}{\lim} x_j \leq C\underset{j \rightarrow \infty}{\lim} y_j$.
\item[--] $x_j \gtrsim y_j \quad \Leftrightarrow \quad y_j \lesssim x_j$.
\end{itemize}
\subsubsection{Main results: Poissonized case}
The main result of this paper, presented below, is a central limit theorem for $\widetilde{U}_{j}(t)$ for which bounds on the Wasserstein distance (see Definition \ref{def:wasserstein}) between $\widetilde{U}_{j}(t)$ and a standard normal distribution are derived in terms of the dimension $d$ of the manifold $\M$, the expected number of observations $R_t$, the scale parameter of the needlet frame $B$, the resolution level $j$, as well as the Besov regularity parameter $s$ of the density function $f$. 
\begin{theorem}
	\label{maintheoremconvergenceinlaw}
Let $\tilde{U}_j$ be the random variable appearing in \eqref{randomtostudy} and let $Z$ denote a random variable with the $\mathcal{N}(0,1)$ distribution. Then, 
	\begin{enumerate}
		\item[(i)] If $R_t B^{-j(2s+d)} \rightarrow \infty$ as $ j\rightarrow \infty$, then
		\begin{eqnarray*}
			d_W\left(\tilde{U}_j,Z \right) & \lesssim &  R_t^{-\frac{1}{2}}B^{js};
		\end{eqnarray*}
Furthermore, if $R_t^{-\frac{1}{2}}B^{js} \rightarrow 0$ as $ j\rightarrow \infty$, then $\tilde{U}_j$ converges in distribution to $Z$ with a rate given by the above bound.
		\item[(ii)] If $R_t B^{-j(2s+d)} \rightarrow 0$ or $R_t B^{-j(2s+d)} = O(1)$ and $R_tB^{-jd} \rightarrow \infty$ as $ j\rightarrow \infty$, then
		\begin{eqnarray*}
			d_W\left(\tilde{U}_j,Z \right) & \lesssim &   B^{-j\frac{d}{2}}.
		\end{eqnarray*}
Furthermore, $\tilde{U}_j$ converges in distribution to $Z$ with a rate given by the above bound.
	\end{enumerate}
\end{theorem}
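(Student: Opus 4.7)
The strategy is to apply the quantitative Stein-Malliavin Wasserstein bound for sums of multiple Poisson Wiener-It\^o integrals (see \cite{PecZheng,PSTU,lesmathias}). Applied to the decomposition $\tilde U_j = \sum_{p=1}^n I_p(\tilde h_{j,p})$, this bound takes the schematic form
\[
d_W(\tilde U_j, Z) \ \lesssim\ \sum_{p,q=1}^n \sum_{(r,l)} \big\| \tilde h_{j,p} \star_r^l \tilde h_{j,q} \big\|_{L^2(\mu_t^{p+q-r-l})} \ +\ \text{(lower-order boundary terms)},
\]
where $\star_r^l$ are the standard symmetric-kernel contraction operators. It therefore suffices to estimate each contraction norm and to identify the dominant contribution in each of the two regimes.

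The first step is to pin down the asymptotics of $\sigma_j^2$ by invoking Proposition \ref{lowerboundvarianceasymp} referenced in the preceding remark. Expanding $h_{j,p}$ via \eqref{eqn:kernelred} and using the near-orthogonality and spatial localization \eqref{locapropofneedlets} of the needlet frame together with $f \in \besov$, one obtains
\[
p!\, R_t^p\, \|h_{j,p}\|_{L^2(\mu_t^p)}^2 \ \sim\ R_t^{2n-p} \sum_{k=1}^{K_j} \beta_{j,k}^{2(n-p)} \|\psi_{j,k}\|_{L^2(f\,dx)}^{2p},
\]
so that, via Besov control of the power sums $\sum_k |\beta_{j,k}|^{2m}$ and the standard $L^p$-norm behavior of needlets, the contributions of the first and top chaoses differ by a factor essentially proportional to $R_t^{n-1} B^{-j(n-1)(2s+d)}$. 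Consequently, $R_t B^{-j(2s+d)} \to \infty$ corresponds to the first chaos dominating, with $\sigma_j^2 \sim R_t^{2n-1} B^{-2j(n-1)s}$, while $R_t B^{-j(2s+d)} = O(1)$ with $R_t B^{-jd} \to \infty$ corresponds to the top chaos dominating, with $\sigma_j^2 \sim R_t^n B^{jd(n-1)}$.

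Next I would bound each contraction $\|h_{j,p} \star_r^l h_{j,q}\|_{L^2}$ by expanding it in the needlet basis and exploiting the near-diagonal behavior of the weighted inner products $\int_\M \psi_{j,k}\psi_{j,k'} f\,dx$ inherited from spatial localization and $f \in \besov$; this reduces each contraction to a finite combination of power sums of $|\beta_{j,k}|$ that are controlled by the Besov norm of $f$. After dividing by $\sigma_j^{p+q}$ and substituting the regime-dependent $\sigma_j$ from the previous step, every term collapses to a bound of the form $C (R_t^{-1/2} B^{js})^{\alpha} (B^{-jd/2})^{\beta}$ with $\alpha, \beta \geq 0$ and $\alpha+\beta \geq 1$. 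In regime (i) the factor $R_t^{-1/2} B^{js}$ dominates, yielding the rate of item (i); in regime (ii) the factor $B^{-jd/2}$ dominates, yielding item (ii). The convergence claims then follow immediately from the vanishing of the respective bounds under the stated hypotheses on $R_t$, $B$, $j$.

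The main obstacle is the bookkeeping of the $O(n^4)$ cross-contractions between chaoses of distinct orders, each of which must be shown to respect the advertised scaling after normalization by $\sigma_j$. The cleanest way to discharge this is to establish a single uniform upper bound of the above form $C (R_t^{-1/2} B^{js})^{\alpha} (B^{-jd/2})^{\beta}$ simultaneously for all admissible indices $(p,q,r,l)$, and then to conclude via the variance dichotomy established in the second step; the delicate needlet-specific estimates and Besov manipulations are where the bulk of the technical work will lie.
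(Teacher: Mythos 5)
Your architecture coincides with the paper's: apply the Lachi\`eze-Rey--Peccati bound (Theorem \ref{lachiezereypeccati}) to the decomposition \eqref{randomtostudy}, prove a variance dichotomy (the paper's Proposition \ref{lowerboundvarianceasymp}), and estimate the contraction and $L^4$ norms via needlet localization and the Besov estimate \eqref{eqn:besovcoeff}. However, your pivotal variance asymptotics are wrong for every $n>2$, and in the fatal direction. When the first chaos dominates, its contribution to \eqref{defvariance} is
$R_t\|h_{j,1}\|^2_{L^2(\mu_t)} \sim R_t^{2n-1}\sum_{k}\beta_{j,k}^{2n-2} \sim R_t^{2n-1}B^{-j\left(s(2n-2)+d(n-2)\right)}$,
because \eqref{eqn:besovcoeff} with $r=2n-2$ carries the dimensional factor $B^{-jd(n-2)}$ that your expression $R_t^{2n-1}B^{-2j(n-1)s}$ omits; when the last chaos dominates, its contribution is $n!\,R_t^n\|h_{j}\|^2_{L^2(\mu_t^n)}\sim R_t^n K_j\sim R_t^n B^{jd}$, not $R_t^n B^{jd(n-1)}$. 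Both of your formulas agree with the correct ones only at $n=2$, so they read as incorrect extrapolations from \cite{bdmp}. Since the bound of Theorem \ref{lachiezereypeccati} is stated for the normalized kernels $\tilde h_{j,p}=h_{j,p}/\sigma_j$, an upper bound on $d_W$ requires a \emph{lower} bound on $\sigma_j^2$; your claimed values exceed the true ones by the unbounded factor $B^{jd(n-2)}$, so normalizing by them yields Wasserstein bounds that are unjustifiably small.

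Even granting the correct variance, the reduction you propose does not deliver the theorem. The claim that every normalized term is bounded by $C\left(R_t^{-1/2}B^{js}\right)^{\alpha}\left(B^{-jd/2}\right)^{\beta}$ with $\alpha,\beta\geq 0$ is false: with the correct normalization, the lowest-order self-contractions in regime (i) produce $R_t^{-1}B^{j(2s+d/2)}=\left(R_t^{-1/2}B^{js}\right)^2 B^{jd/2}$ (the first line of \eqref{firstcaseoftheorem31} with $p=2$, $\ell=r$), which forces $\beta=-1$ and is tamed only by invoking the hypothesis $R_tB^{-j(2s+d)}\to\infty$. Moreover, your dominance logic is backwards: in regime (i) one has $R_t^{-1/2}B^{js}=\left(R_tB^{-j(2s+d)}\right)^{-1/2}B^{-jd/2}\ll B^{-jd/2}$, so of your two factors it is $B^{-jd/2}$ that is the larger, and a uniform bound of your claimed form with $\alpha+\beta\geq 1$ would only give the weaker rate $B^{-jd/2}$, not the asserted $R_t^{-1/2}B^{js}$. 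Obtaining part (i) requires verifying term by term that enough powers of $R_t^{-1/2}B^{js}$ or of $\left(R_tB^{-j(2s+d)}\right)^{-1}$ are present, which is precisely the content of Propositions \ref{propocontractions} and \ref{proponormL4} and the case analysis in the proofs of Theorems \ref{maintheoremconvergenceinlaw2} and \ref{maintheoremconvergenceinlaw} --- none of which your proposal carries out. A minor further point: when $R_tB^{-j(2s+d)}=O(1)$, all chaoses are asymptotically equivalent (case (3) of Proposition \ref{lowerboundvarianceasymp}); this is not top-chaos domination, although the lower bound $\sigma_j^2\gtrsim R_t^nB^{jd}$ needed there does persist.
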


\subsubsection{Main results: de-Poissonized case}

\noindent In what follows, we show how Theorem \ref{maintheoremconvergenceinlaw} can be extended to include the case of classical (de-Poissonized) $U$-statistics. This result will be based on a de-Poissonization lemma proved in \cite[Lemma 1.1]{bdmp}, that we will restate here for the sake of convenience. The framework we place ourselves in is the following: let $X = \{X_i \colon i\geq 1\}$ be a sequence of independent and identically distributed random variables with values in $\M$. Furthermore, let $\{N_m \colon m\geq 1\}$ be a sequence of Poisson random variables independent of $X$, such that $N_m$ has a Poisson distribution with mean $m$ for every $m$. For every $m\geq n$ (recall that $n$ is the order of the $U$-statistics under consideration), the symbol $U_m$ denotes the Poissonized $U$-statistic with kernel given by \eqref{eqn:kernel}:
\begin{equation}
\label{poiustat}
U_m = \sum_{1\leq i_1,\ldots, i_n\leq N_m} h_j(X_{i_1},\ldots, X_{i_n}),
\end{equation}
where the sum runs over al $n$-tuples $(i_1, \ldots, i_n)$ such that $i_j\neq i_k$ for $j\neq k$. For every $m\geq 1$, the symbol $U'_m$ denotes the classical $U$-statistic
\begin{equation}
\label{depoiustat}
U'_m = \sum_{1\leq i_1, \ldots, i_n\leq m} h_j(X_{i_1},\ldots, X_{i_n}),
\end{equation}
where, as before, the sum runs over al $n$-tuples $(i_1,\ldots, i_n)$ such that $i_j\neq i_k$ for $j\neq k$. Note that $\mathbb{E}\left( U_m\right)  = \mathbb{E}\left( U'_m\right)  = 0$. The following lemma is the de-Poissonization procedure introduced in \cite[Lemma 1.1]{bdmp}.
\begin{lemma}[Quantitative de-Poissonization lemma]
\label{depoisslemma}
Assume that $\mathbb{E}\left( U_m^2\right) \to 1$ as $m \to \infty$. Then, as $m \rightarrow \infty$, $\mathbb{E}\left( U_m'^2\right)  \to 1$ and
$$
\mathbb{E}[(U_m - U'_m)^2] = O(m^{-1/2}).
$$
\end{lemma}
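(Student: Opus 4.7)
The plan is to realize $U_{m}$ and $U'_{m}$ jointly on the same i.i.d. sequence $\{X_{i}\}_{i\geq 1}$, with $U_{m}$ built by keeping only the first $N_{m}$ of the $X_{i}$'s, so that $U_{m} = U'_{N_{m}}$ almost surely. The problem then becomes that of estimating
$$\mathbb{E}[(U'_{N_{m}} - U'_{m})^{2}],$$
that is, of quantifying how much the classical U-statistic $U'_{k}$ fluctuates when its sample size $k$ varies around $m$ by an amount $N_{m}-m$ that has mean $0$ and variance $m$.

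For each $k\geq n$, I would apply the Hoeffding decomposition of $U'_{k}$: using the conditional projections $h_{j,p}$ already defined in \eqref{eqn:kernelred} and their completely-degenerate reductions $\tilde h_{j,p}$ obtained by the standard inclusion-exclusion, one obtains an orthogonal expansion of the form
$$U'_{k} = \sum_{p=1}^{n} c_{p}(k)\, \Pi_{p}(k),$$
where $c_{p}(k)$ is an explicit combinatorial coefficient of order $k^{n-p}$ and $\Pi_{p}(k)$ is a completely-degenerate sum of $\tilde h_{j,p}$ over the $p$-subsets of $\{1,\ldots,k\}$. The hypothesis $\mathbb{E}[U_{m}^{2}]\to 1$, combined with the chaos isometry for the multiple integrals $I_{p}(h_{j,p})$, pins down the correct scaling of $\|\tilde h_{j,p}\|_{L^{2}}$ relative to $m$.

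The core estimate is to bound, for each $p$, the quantity $\mathbb{E}[(c_{p}(N_{m})\Pi_{p}(N_{m}) - c_{p}(m)\Pi_{p}(m))^{2}]$. Conditionally on $N_{m}$, the difference $\Pi_{p}(N_{m}) - \Pi_{p}(m)$ is a sum of terms that necessarily involve at least one index lying in the symmetric difference $\{\min(N_{m},m)+1,\ldots,\max(N_{m},m)\}$; by orthogonality of the completely-degenerate components across disjoint index sets, its conditional $L^{2}$-norm squared is controlled by $|N_{m}-m|\cdot m^{p-1}\|\tilde h_{j,p}\|_{L^{2}}^{2}$, and the mismatch $c_{p}(N_{m}) - c_{p}(m) = O(|N_{m}-m|\,m^{n-p-1})$ is handled similarly. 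Summing over $p$, taking the expectation in $N_{m}$, and invoking $\mathbb{E}|N_{m}-m|\leq\sqrt{m}$ yields the announced bound $\mathbb{E}[(U_{m}-U'_{m})^{2}] = O(m^{-1/2})$. The convergence $\mathbb{E}[U'^{2}_{m}] \to 1$ then follows from the $L^{2}$-triangle inequality $\bigl|\|U'_{m}\|_{L^{2}} - \|U_{m}\|_{L^{2}}\bigr| \leq \|U_{m}-U'_{m}\|_{L^{2}}$ combined with the hypothesis on $\mathbb{E}[U_{m}^{2}]$.

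The main obstacle is obtaining the sharp exponent $-1/2$: a crude pointwise bound on $|U'_{N_{m}} - U'_{m}|$ would include all $p$-subsets of the combined sample window and fall catastrophically short of the target rate. The decisive gain comes from the orthogonality inherent in the Hoeffding decomposition, which restricts the effective contribution to subsets that genuinely touch the symmetric difference of $\{1,\ldots,N_{m}\}$ and $\{1,\ldots,m\}$. This replaces a deterministic $O(m)$ index count by the random $|N_{m}-m|$ of typical size $\sqrt{m}$, and is responsible for the rate $m^{-1/2}$ appearing in the statement.
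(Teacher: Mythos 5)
The paper itself contains no proof of Lemma \ref{depoisslemma}: it is restated verbatim from Lemma 1.1 of \cite{bdmp} (``the de-Poissonization procedure introduced in \cite[Lemma 1.1]{bdmp}''), so there is no internal argument to compare yours against. Your sketch is, however, the standard route to exactly this kind of statement, and its key steps are sound: the coupling $U_m = U'_{N_m}$ (legitimate because $N_m$ is independent of the sequence $X$), the Hoeffding decomposition of $U'_k$ into completely degenerate components that are orthogonal across distinct index sets and distinct orders, the observation that conditionally on $N_m$ the difference $U'_{N_m}-U'_m$ only charges $p$-subsets meeting the window between $m\wedge N_m$ and $m\vee N_m$, the conversion of $\mathbb{E}\left|N_m-m\right|\le\sqrt{m}$ into the exponent $-1/2$, and the closing triangle inequality $\bigl|\Vert U'_m\Vert_{L^2}-\Vert U_m\Vert_{L^2}\bigr|\le\Vert U_m-U'_m\Vert_{L^2}$ giving $\mathbb{E}\left(U_m'^2\right)\to 1$. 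This is also, in substance, how the cited reference argues.

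Two points would need care in a complete write-up. First, your conditional bound $\left|N_m-m\right|m^{p-1}\Vert\tilde h_{j,p}\Vert_{L^2}^2$, and likewise the coefficient mismatch $c_p(N_m)-c_p(m)=O\left(\left|N_m-m\right|m^{n-p-1}\right)$, are only correct when $N_m$ is of order $m$; in general the number of affected $p$-subsets is of order $\left|N_m-m\right|\max(N_m,m)^{p-1}$, so after unconditioning you need Cauchy--Schwarz together with Poisson moment bounds of the type $\mathbb{E}\left[N_m^{2a}(N_m-m)^2\right]=O\left(m^{2a+1}\right)$ to retain the rate; this is routine but cannot be waved away. Second, the hypothesis $\mathbb{E}\left(U_m^2\right)\to 1$ controls, via the Poisson chaos isometry, the kernels $h_{j,p}$ of \eqref{eqn:kernelred}, which are partial integrals against the intensity $\mu_t=R_t\mu$, whereas your orthogonality computation runs over degenerate components defined through the probability measure $\mu$; passing from $\sum_p p!\,m^p\Vert h_{j,p}\Vert^2_{L^2(\mu^p)}=O(1)$ to the bound $m^{2n-p}\Vert g_p\Vert^2_{L^2(\mu^p)}=O(1)$ for the degenerate reductions $g_p$ goes through inclusion--exclusion and does work out (the dominant term is the one of highest order), but that translation is precisely where the bookkeeping lives. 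Also note a purely notational clash: the paper reserves $\tilde h_{j,p}$ for $\sigma_j^{-1}h_{j,p}$, which is not your completely degenerate kernel.
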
 
\noindent The following result extends Theorem \ref{maintheoremconvergenceinlaw}.
\begin{theorem}
	\label{maintheoremconvergenceinlawdepoissonized}
Let $\widetilde{U'}_m$ be the renormalized version of the random variable appearing in \eqref{depoiustat} and let $Z$ denote a random variable with the $\mathcal{N}(0,1)$ distribution. Then,
	\begin{enumerate}
		\item[(i)] If $m B^{-j(2s+d)} \rightarrow \infty$ as $ m\rightarrow \infty$, then
		\begin{eqnarray*}
			d_W\left(\widetilde{U'}_m,Z \right) & \lesssim &  m^{-\frac{1}{2}}B^{js} + m^{-\frac{1}{4}};
		\end{eqnarray*}
Furthermore, if $m^{-\frac{1}{2}}B^{js} \rightarrow 0$ as $ j\rightarrow \infty$, then $\widetilde{U'}_j$ converges in distribution to $Z$ with a rate given by the above bound.
		\item[(ii)] If $m B^{-j(2s+d)} \rightarrow 0$ or $m B^{-j(2s+d)} = O(1)$ and $mB^{-jd} \rightarrow \infty$ as $ j\rightarrow \infty$, then
		\begin{eqnarray*}
			d_W\left(\widetilde{U'}_m,Z \right) & \lesssim &   B^{-j\frac{d}{2}} + m^{-\frac{1}{4}}.
		\end{eqnarray*}
Furthermore, $\widetilde{U}_j$ converges in distribution to $Z$ with a rate given by the above bound.
	\end{enumerate}
\end{theorem}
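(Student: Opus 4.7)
The approach is a direct combination of Theorem \ref{maintheoremconvergenceinlaw} and the quantitative de-Poissonization estimate provided by Lemma \ref{depoisslemma}. Denoting by $\widetilde{U}_m$ the normalized Poissonized $U$-statistic from \eqref{randomtostudy} evaluated at $R_t = m$, the triangle inequality for the Wasserstein distance gives
\[
d_W(\widetilde{U'}_m, Z) \leq d_W(\widetilde{U'}_m, \widetilde{U}_m) + d_W(\widetilde{U}_m, Z),
\]
and Theorem \ref{maintheoremconvergenceinlaw} (applied with $R_t = m$) controls the second summand by $m^{-1/2}B^{js}$ in case (i) and by $B^{-jd/2}$ in case (ii).

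For the first summand, I would use the standard inequality $d_W(X,Y) \leq \E|X-Y| \leq (\E[(X-Y)^2])^{1/2}$, together with the fact that $\E(\widetilde{U}_m^2) = 1$ by construction, so that the hypothesis of Lemma \ref{depoisslemma} is satisfied. The lemma then yields $\E[(\widetilde{U}_m - \widetilde{U'}_m)^2] = O(m^{-1/2})$ (modulo the normalization subtlety discussed below), hence $d_W(\widetilde{U'}_m, \widetilde{U}_m) = O(m^{-1/4})$. Summing the two contributions produces the announced rates in both regimes.

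The main bookkeeping task is reconciling the normalization employed in Lemma \ref{depoisslemma} with that of $\widetilde{U'}_m$ appearing in the statement of the theorem. Since $\widetilde{U'}_m$ is the renormalized version of $U'_m$, it is naturally divided by its own standard deviation $\sigma'_j$, whereas the lemma controls $U_m - U'_m$ under the common normalization by $\sigma_j$ (the standard deviation of $U_m$). However, the conclusion $\E(U'^2_m) \to 1$ in the lemma implies $\sigma'_j/\sigma_j \to 1$ with an error that, by the reverse triangle inequality in $L^2$, is of order at most $m^{-1/4}$; this allows one to switch between the two normalizations at the cost of an additional $O(m^{-1/4})$ term, which is absorbed into the same overall rate. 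Beyond this verification, the argument is a clean composition of the two main ingredients and presents no further technical obstacles.
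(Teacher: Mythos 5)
Your proposal follows exactly the paper's own argument: the triangle inequality $d_W(\widetilde{U'}_m,Z) \leq d_W(\widetilde{U'}_m,\widetilde{U}_m) + d_W(\widetilde{U}_m,Z)$, with Theorem \ref{maintheoremconvergenceinlaw} controlling the second summand and Lemma \ref{depoisslemma} the first. In fact you supply details the paper leaves implicit -- the passage from the $L^2$ bound to the Wasserstein bound via $d_W(X,Y) \leq \left(\E\left[(X-Y)^2\right]\right)^{1/2}$, whence the $m^{-1/4}$ term, and the normalization bookkeeping -- so the proposal is correct and matches the paper's proof.
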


\subsection{Applications to Cosmology}\label{applications}
\noindent This section presents some statistical applications meant to provide an applicative background for the results stated here. Both the examples focus on astrophysical problems, in particular on the so-called Cosmic Microwave Background (CMB) radiation, a topic of premier importance for cosmologists in order to understand the origin of the Universe. Roughly speaking, the CMB radiation can be thought of as an isotropic and homogenous thermal radiation generated at the beginning of the Universe. From a mathematical point of view, it can described as a realization of a Gaussian random field over the sphere. The CMB radiation was extensively analysed by spatial missions such \textit{WMAP} or \textit{Planck}, which have collected huge datasets on full-sky fluctuations of the CMB radiation. These measurements are often corrupted by the presence of galactic foregrounds and point sources, such as thermal radiation from dust particles generated by our galaxy and extragalactic point-like microwave signals coming from galaxies and cluster of galaxies among others. As a consequence, in the recent years, a lot of statistical methods were developed in order to recover the original CMB signal. Note that measurements of the CMB radiation are strongly affected by a strong noise due to the presence of the Milky Way, which totally masks the actual CMB signal along the celestial equator. For this reason, this subregieon is called ``masked region''. Using needlets allows us to discard the pixels corresponding to the masked region and only focus on the pixels where the CMB signal is relevant. There exists a constantly growing literature about this topic: we suggest \cite{cama,ghosh,pietrobon2} and the textbooks \cite{dode2004, MaPeCUP} for further details and discussions.  
\subsubsection{Global thresholding} One of the main issues related to the CMB radiation is the estimation of its power spectrum density function. Undergoing investigations on this topic use the so-called global thresholding techniques, which can be summarized as follows. Start by considering the standard regression problem: let $\l\{X_i,Y_i\r\}$, $i=1,\ldots,n$, be independent observational pairs so that $\l\{X_i\r\}$ are uniformly distributed locations over $\M$ (in this case, the 2-dimensional sphere), each of them related to the corresponding $Y_i$ by the following regression formula:
\begin{equation*}
	Y_i=f\bra{X_i}+\varepsilon_i.
\end{equation*}
The function $f \colon \M\mapsto \reals$ is the so-called regression function, while $\l\{\varepsilon_i \colon i=1,\ldots,n \r\}$ is a set of independent zero-mean random variables which can be viewed as observational noise. The aim of this problem is to estimate $f$ from the observational pairs $\l\{X_i,Y_i\r\}$. Among several techniques developed for this purpose, we focus on the so-called global thresholding needlet method, cf. \cite{durastanti6}. In this case, an estimator for $f$ is given by
\begin{equation*}
	\widehat{f}\bra{x} = \sum_{j}\tau_j\sum_{k} \widehat{\beta}_{j,k} \needlet{x}, \quad x\in \M,
\end{equation*}   
where $\widehat{\beta}_{j,k}$ are the empirical needlet coefficients, which act as unbiased estimators of the theoretical ones.
The object $\tau_j$ is called the thresholding function: its purpose is to establish if the set of empirical coefficients corresponding to a given resolution level $j$ reproduces $f$ properly or if it is too affected by the presence of noise. In the first case, the resolution level is kept by the selection procedure, otherwise it is discarded. From a practical point of view, the function $\tau_j$ compares a given $U$-statistic of order $p$ to a threshold proportional to the size of the pixel surrounding the needlet, i.e., $B^{dj}$, and the value $n^{-p/2}$. More specifically, we define 
\begin{equation}\label{eqn:theta}
	\widehat{\Theta}_j\bra{p} = \binom{n}{p}^{-1} \sum_{k=1}^{K_j}\sum_{i_1,\ldots,i_p \in \Sigma_{p,n}} \prod_{m=1}^{p}Y_{i_m} \needlet{X_{i_m} }, 
\end{equation}
whose expectation is given by 
\begin{equation*}
	\Ex \bra{\widehat{\Theta}_j\bra{p}}  = \sum_{k=1}^{K_j}\betac^p
\end{equation*}
while the threshold function is given by
\begin{equation*}
 \tau_j = \ind{\abs{\widehat{\Theta}_j\bra{p}}\geq \frac{B^{dj}}{n^{\frac{p}{2}}}},
\end{equation*}
see, e.g., \cite{durastanti6}.
In the nonparametric estimation framework, the performance of an estimator is typically evaluated by means of its $L^p$-risk, i.e., $\Ex\norm{\widehat{f}-f}_{\Lp}^p$, which measures the discrepancy between the regression function and its estimator. Note that the optimal convergence rate for the $L^p$-risk in the global thresholding framework by needlet methods is given by $n^{\frac{-sp}{2s+d}}$. Applying the results developed here leads to a more precise understanding of the asymptotic behaviour of the estimator. Indeed, they allow to estimate how and how fast each $U$-statistic $\widehat{\Theta}_j\bra{p}$ attains its expectation, providing therefore information about the $p$th moment of the wavelet decomposition, for any resolution level $j$. Note that, even if the structure of \eqref{eqn:theta} seems to be different from \eqref{eqn:kernel}, this problem can be reformulated in terms of density estimation, where we use a sample of size $n$ of observations $\bbra{X_1,\ldots,X_n}$, independent and identically distributed with density $f$. In this case, the thresholding function is given by 
\begin{equation*}
\widehat{\Theta}^\prime_j\bra{p} = \binom{n}{p}^{-1} \sum_{k=1}^{K_j}\sum_{i_1,\ldots,i_p \in \Sigma_{p,n}} \prod_{m=1}^{p} \needlet{X_{i_m} }, 
\end{equation*}
\subsubsection{Point sources detection} Another relevant topic related to the CMB radiation refers to the so-called sparse component separation. As already mentioned, several sources contribute to corrupt the signal measured by CMB experiments, such as, among others, thermal dust, spinning dust, galaxies and clusters. Various techniques were developed in order to recover the original CMB signal, giving rise to the research area of component separation, cf., for instance, \cite{axel,bobsta,scodeller}. In particular, we are interested on the component due to the so-called point sources, the contribution of which becomes of growing importance at higher frequencies in the infrared. Furthermore, the locations of point sources are random within the full-sky and, therefore, they can be described by means of a Poisson process. More specifically, let $A \subset \sphere$ and let $\xi_1,\ldots,\xi_P$ be locations on the sphere: their contribution to the measured power spectrum function is given in terms of 
\begin{equation*}
	N^{(p.s.)}_t\bra{A}=\sum_{p=1}^P N_{t}^{\bra{p}}\delta_{\xi_p}\bra{A},
\end{equation*}
where the label p.s. stands for point sources, $\delta_{\xi_p}$ is the Dirac mass at $\xi_p$ and the mapping $t \mapsto N_t^{\bra{p}}$ is an independent Poisson process with control measure $\lambda_p$ over $\left.\left[0,\infty\right.\right)$, see also \cite{dmp}. In practice, point sources add nonlinearity terms to the original CMB signal, supposed to be Gaussian: as usual, in the presence of nonlinearity, first and second order statistical methods can prove inadequate to perform any analysis and, for this reason, higher order statistical methods have been developed, see again \cite{axel,bobsta}. On the other hand, $U$-statistics can be defined to construct estimators of the polyspectra of order $k$ (namely, the $\l(k+1\r)$-point correlation function in harmonic space) associated to point sources. The techniques  presented here can be usefully applied, for instance, to establish the rate of convergence of these estimators to the bispectrum related to the infrared components.   
More specifically the normalized needlet bispectrum is given by
\begin{equation*}
I_{j_{1},j_{3},j_{3}}= \sum_{k_3}\frac{\widehat{\beta}_{j_1,k_1}}{\operatorname{Var}\bra{\widehat{\beta}_{j_1,k_1}}}\frac{\widehat{\beta}_{j_2,k_2}}{\operatorname{Var}\bra{\widehat{\beta}_{j_2,k_2}}}\frac{\widehat{\beta}_{j_3,k_3}}{\operatorname{Var}\bra{\widehat{\beta}_{j_3,k_3}}} \delta_{j_1,j_2,j_3}\bra{k_1,k_2,k_3}h_{j_1,j_2,j_3}\bra{k_1,k_2,k_3},
\end{equation*}
where the functions $\delta_{j_1,j_2,j_3}\bra{k_1,k_2,k_3}$ and $h_{j_1,j_2,j_3}\bra{k_1,k_2,k_3}$ fix an explicit relationship among the set of indexes $j_1,j_2,j_3,k_1,k_2,k_3$. For any $j \geq 0$, the needlet bispectrum corresponding to the level $j$ can be explicitly symmetrized as follows
\begin{equation*}
	I_{j,j,j}=\binom{n}{3}^{-1}\sum_{\bbra{i_1,i_2,i_3} \in \Sigma_{3,n}} \sum_{k_3}\frac{\psi_{j,k_1}\bra{X_{i_1}}}{\operatorname{Var}\bra{\widehat{\beta}_{j,k_{1}}}}\frac{\psi_{j,k_{2}}\bra{X_{i_2}}}{\operatorname{Var}\bra{\widehat{\beta}_{j,k_2}}}\frac{\psi_{j,k_{3}}\bra{X_{i_3}}}{\operatorname{Var}\bra{\widehat{\beta}_{j,k_3}}} \delta_{j,j,j}\bra{k_1,k_2,k_3}h_{j,j,j}\bra{k_1,k_2,k_3},
\end{equation*}
whose asymptotic behaviour can be studied by means of Theorem \ref{maintheoremconvergenceinlawdepoissonized}.
\subsection{Plan of the paper}\label{sub:plan}
The paper is organized as follows: Section \ref{sec:prel} provides some preliminary background on Poisson random measures, the Stein-Malliavin method, $U$-statistics, as well as on needlet frames and Besov spaces. Section \ref{sec:proof} contains a more detailed statement of the main results as well as their proofs. In Section \ref{interpretationandcomparison}, we offer some interpretations of our main results as well as a parallel with other results on related topics. Finally, Section \ref{sec:auxiliary} presents some auxiliary results needed in Section \ref{sec:proof}, some of which also have their own independent interest.

\section{Preliminaries}\label{sec:prel}
\subsection{Poisson random measures and $U$-statistics}
Let us now introduce the concept of Poisson random measure over a general measure space.
\begin{definition}[\textbf{Poisson random measure}]\label{def:poisson}
	Let $\left(A, \mathcal{A}, \zeta \right)$ be a measure space and $\zeta$  be a $\sigma$-finite, non-atomic measure. A Poisson random measure on $A$ with control measure (or intensity measure) $\zeta$ is a collection of random variables taking values in $\mathbb{Z}^+ \cup \left\{+\infty\right\}$, labeled by $\left\{N\bra{C} \colon C \in \mathcal{A}\right\}$, such that the following properties hold:
	\begin{enumerate}
		\item $N\bra{C}$ has a Poisson distribution of parameter $\zeta \bra{C}$ for every $C\in\mathcal{A}$;
		\item if $C_1,\ldots,C_n \in \mathcal{A}$ are pairwise disjoint, then $N\bra{C_1},\ldots,N\bra{C_n}$ are independent.
	\end{enumerate}
\end{definition}
We recall here the general definition of a Poisson based $U$-statistic given in \cite{lesmathias}, stressing that from now on, for $p \geq2$,  $L^p_s\bra{\zeta^k}\subset L^p\bra{\zeta^k}$ will denote the subspace of symmetric functions.
\begin{definition}[\textbf{Poisson $U$-statistics}]\label{def:ustat}
	Consider a Poisson random measure $N$, with $\sigma$-finite non-atomic intensity measure $\zeta$ on $\bra{A, \mathcal{A}}$ and fix $k \geq 1$. The random variable $F$ is a $U$-statistic of order $k$ based on $N$ if there exists a kernel $h \in L^1_s\bra{\zeta^k}$ such that
	\begin{equation}
	\label{e:ustat}
		F=\sum_{\bra{x_1,\ldots,x_k}\in N^k_{\neq}}h\bra{x_1,\ldots,x_k}.
	\end{equation}	
\end{definition} 
\noindent The following crucial fact is proved by Reitzner and Schulte in \cite[Lemma
3.5 and Theorem 3.6]{lesmathias}.
\begin{proposition}
	\label{p:zap} Consider a kernel $h\in L_{s}^{1}(\zeta^{k})$ such that the
	corresponding $U$-statistic $F$ in \eqref{e:ustat} is square-integrable.
	Then, $h$ is necessarily square-integrable, and $F$ admits a chaotic
	decomposition of the form 
	\begin{equation*}
	F=\int_{A^{k}}h\left( x_{1},\ldots ,x_{k}\right) d\zeta^{k}+\sum_{i=1}^{\infty
	}I_{i}\left( h_{i}\right) ,
	\end{equation*}%
	with 
	\begin{equation*}
	h_{i}(x_{1},\ldots ,x_{i})=\binom{k}{i}\int_{A^{k-i}}h(x_{1},\ldots
	,x_{i},x_{i+1},\ldots ,x_{k})d\zeta^{k-i},\quad (x_{1},\ldots ,x_{i})\in
	A^{i},
	\end{equation*}
	for $1\leq i\leq k$, and $h_{i}=0$ for $i>k$. In particular, $h=h_{k}$ and
	the projection $h_{i}$ is in $L_{s}^{2}(\zeta ^{i})$ for each $1\leq i\leq k$.
\end{proposition}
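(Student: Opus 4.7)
The plan is to combine the multivariate Mecke (Slivnyak) formula for Poisson random measures with the Stroock-type formula for the chaotic decomposition of square-integrable Poisson functionals. Recall that, for any nonnegative measurable $g$ on $A^k$,
$$\E\bra{\sum_{(x_1,\ldots,x_k) \in N^k_{\neq}} g(x_1,\ldots,x_k)} = \int_{A^k} g\, d\zeta^k,$$
and that any $G \in L^2(\sigma(N))$ admits the decomposition $G = \E(G) + \sum_{n\geq 1} I_n(f_n)$, where $f_n = \frac{1}{n!}\E(D^n G)$ and $D$ is the add-point difference operator $D_y G(\omega) = G(\omega + \delta_y) - G(\omega)$. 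Both tools are available in the Poisson setup of Subsection \ref{sub:steinmalliavin}.

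First I would establish that $h \in L^2_s(\zeta^k)$. Applying Mecke to $F \cdot F$ and partitioning the indices of the two $k$-tuples according to which points coincide yields
$$\E(F^2) = \sum_{\sigma} \int_{A^{2k-|\sigma|}} (h\otimes h)^\sigma\, d\zeta^{2k-|\sigma|},$$
where the sum runs over (partial) matchings $\sigma$ between $\{1,\ldots,k\}$ and $\{1,\ldots,k\}$. The contribution of the full bijective matching is exactly $k!\int_{A^k} h^2\, d\zeta^k$, so that if $h \geq 0$ every term is nonnegative and $\norm{h}_{L^2(\zeta^k)}^2 \leq \frac{1}{k!}\E(F^2) < \infty$ is immediate. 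For signed $h$, the decomposition $h = h_+ - h_-$ together with a truncation argument (restricting both $h_{\pm}$ to subsets of $A^k$ of finite $\zeta^k$-measure on which the associated $U$-statistics are manifestly square-integrable) reduces matters to the nonnegative case and, after passing to the limit, gives the general $L^2$ bound.

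Next, I would compute the iterated difference $D^n F$. By the inclusion-exclusion definition, $D^n_{y_1,\ldots,y_n} F$ selects those tuples of $(N \cup \{y_1,\ldots,y_n\})^k_{\neq}$ which contain every $y_i$. Using the symmetry of $h$ to sum over the $k!/(k-n)!$ positions where the $y_i$ may be placed (valid for $n \leq k$, and zero for $n > k$), one obtains for distinct $y_1,\ldots,y_n$
$$D^n_{y_1,\ldots,y_n} F = \frac{k!}{(k-n)!} \sum_{(x_1,\ldots,x_{k-n}) \in (N\setminus\{y_i\})^{k-n}_{\neq}} h(y_1,\ldots,y_n,x_1,\ldots,x_{k-n}).$$
Taking expectations, invoking Mecke once more, and dividing by $n!$ yields $h_n = \binom{k}{n}\int_{A^{k-n}} h\, d\zeta^{k-n}$, which is the claimed formula; the identifications $h_k = h$ and $h_n = 0$ for $n > k$ are immediate. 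Finally, $h_i \in L^2_s(\zeta^i)$ follows from a Jensen/Cauchy--Schwarz bound applied to the partial integral defining $h_i$, combined with $h \in L^2_s(\zeta^k)$ from the first step, while the symmetry is inherited from $h$.

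The main obstacle is the first step. Extracting $L^2$-control of the kernel from $L^2$-control of the $U$-statistic itself is delicate: the expansion of $\E(F^2)$ is a sum over all partial matchings, and when $h$ is signed these contributions are not of a single sign, so one cannot simply drop all terms except the diagonal. The technical crux is therefore to isolate cleanly the full-match term while justifying the truncation/decomposition into $h_{\pm}$ (so that dominated convergence applies simultaneously to the $U$-statistics and their second moments). The remaining combinatorial identification of the chaos kernels via $D^n F$ is comparatively routine once the correct formula has been guessed.
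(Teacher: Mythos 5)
Note first that the paper does not prove Proposition \ref{p:zap} at all: it is quoted verbatim from Reitzner and Schulte \cite{lesmathias} (their Lemma 3.5 and Theorem 3.6), so your attempt has to be measured against their argument. Your second step is essentially identical to it: compute $D^n F$ by inclusion--exclusion and symmetry (the $k!/(k-n)!$ placement count and the resulting formula are correct), take expectations via the Mecke formula using only $h \in L^1_s(\zeta^k)$, and feed the result into the Fock space representation of Last and Penrose (your ``Stroock-type formula''). That part is sound and is exactly the cited proof.

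The genuine gap is your first step, precisely the point you flag as the ``technical crux''. For signed $h$ the partial-matching expansion of $\E\left(F^2\right)$ cannot be used term by term, and the proposed repair --- writing $h = h_+ - h_-$ plus truncation --- does not reduce to the nonnegative case: square-integrability of $F$ does not imply square-integrability of the $U$-statistics $F_\pm$ built from $h_\pm$, since cancellation between the two parts can keep $F$ in $L^2$ while both $F_\pm$ have divergent second moments (their divergent matching terms cancel in $F$). So there is no nonnegative case to fall back on, and dominated convergence has nothing to act on. A second, smaller flaw: your Jensen/Cauchy--Schwarz argument for $h_i \in L^2_s(\zeta^i)$ needs $\zeta(A) < \infty$, whereas the statement (cf.\ Definition \ref{def:ustat}) allows any $\sigma$-finite $\zeta$; for infinite $\zeta$ one can have $h \in L^1_s(\zeta^k) \cap L^2_s(\zeta^k)$ whose partial integrals are not square-integrable. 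Both gaps are closed by the tool you already invoke, which is why the logical order in \cite{lesmathias} is the reverse of yours: the Fock space representation needs only $F \in L^2$ (with the kernels identified pointwise via Mecke and $h \in L^1$), and its isometry $\operatorname{Var}\left(F\right) = \sum_{i \geq 1} i!\, \norm{h_i}_{L^2\left(\zeta^i\right)}^2 < \infty$ then \emph{delivers} $h_i \in L^2_s(\zeta^i)$ for every $i$, and in particular $h = h_k \in L^2_s(\zeta^k)$. In other words, square-integrability of $h$ should be derived from the chaos expansion, not established beforehand; your step 1 should simply be deleted and replaced by this observation.
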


\subsection{Stein-Malliavin bounds}\label{sub:steinmalliavin}
In this section, we will provide a quick overview on Stein-Malliavin bounds for Poisson random measure: the reader is referred to \cite{PSTU, PecZheng} for further details and discussions. 
\begin{definition}[\textbf{Wasserstein distance}]\label{def:wasserstein}
Let $X$ and $Y$ be two real-valued random variables. The Wasserstein distance between the laws of $X$ and $Y$ is defined to be
\begin{equation*}
d_{W}(X,Y) = \underset{h \in \operatorname{Lip}(1)}{\sup}\abs{\Ex\left[h(X) \right] - \Ex\left[h(Y) \right] },
\end{equation*}
where $\operatorname{Lip}(1)$ denotes the class of real-valued Lipschitz functions, from $\reals$ to $\reals$, with Lipschitz constant less or equal to one. 
\end{definition}
\noindent Recall that the topology induced by the Wasserstein metric on the class of probability measures on $\reals$ is strictly stronger than the topology of weak convergence.
~\\~\\
We now introduce the so-called star-contraction between $g \in L^2\bra{\zeta^p}$ and $h \in L^2\bra{\zeta^m}$. The so-called star-contraction operator $\star_r^{\ell}$ reduces the number of variables in the tensor product between $g$ and $h$ from $p+m$ variables to $p+m-r-\ell$ variables  by identifying $r$ variables in $g$ and $h$ and integrating with respect to $\ell$ among them. More precisely,
\begin{definition}\label{def:starcontraction}
	For $p,m \geq 1$, $r=1,\ldots,p \wedge m$ and $\ell=1,\ldots,r$, given the symmetric functions $g \in L^2\bra{\zeta^p}$ and $h \in L^2\bra{\zeta^m}$, the function $g \star_r^{\ell} h\in L^2\bra{\zeta^{p+m-r-\ell}}$ is the called star-contraction of index $\bra{r,\ell}$ between $g$ and $h$ and is defined as follows
	\begin{eqnarray*}
		&& g \star_r^{\ell} h\bra{t_1,\ldots,t_{p-r},\gamma_1,\ldots,\gamma_{r-\ell},s_1,\ldots,s_{m-\ell}} := \int_{A^{\ell}} g\bra{t_1,\ldots,t_{p-r},\gamma_1,\ldots,\gamma_{r-\ell},z_1,\ldots,z_{\ell}} \\
		&&    \qquad\qquad\qquad\qquad\qquad\qquad\qquad\qquad\qquad h\bra{s_1,\ldots,s_{m-r},\gamma_1,\ldots,\gamma_{r-\ell},z_1,\ldots,z_{\ell}}\zeta^{\ell}\left(dz_1 \cdots dz_{\ell} \right) .
	\end{eqnarray*}
\end{definition} 
~\\
\noindent From now on, we will consider $A = \reals^+\times \M$ and $\mathcal{A}=\mathcal{B}\bra{\reals^+\times \M}$, the class of Borel subsets of $\reals^+\times \M$. We will denote by $N$ the Poisson random measure on $\reals^+\times \M$ with intensity given by the product $\zeta=\rho \times \mu$. The measure $\rho$ is proportional to the Lebesgue measure $\ll$ on $\reals$, i.e., $\rho\bra{ds}=R \cdot \ll\bra{ds}$. $R$ is a fixed parameter, such that $\rho\bra{\sbra{0,t}}=R\cdot t =: R_t$. Note that $t$ can be viewed as the time, cf. also \cite{dmp,bdmp}. On the other hand, $\mu$ describes a probability measure over $\M$ absolutely continuous with respect to the Lebesgue measure over $\M$, so that we can rewrite $\mu\bra{dx}=f\bra{x}dx$. Therefore, for any $t>0$, the object $N_t$ will denote a Poisson  measure over $\bra{\M, \mathcal{B}\bra{\M}}$ with control $\mu_t:= R_t \mu$ such that for any $B \in \mathcal{B}\bra{\M}$, it holds
\begin{equation*}
	\label{eqn:poisson}
	N_t\bra{B}:= N\bra{\sbra{0,t}\times B}.
\end{equation*}
~\\
\noindent The following result is taken from \cite{lrp} and provides a bound on the Wasserstein distance between a vector of multiple Poisson integrals and the multidimensional Gaussian distribution with a given covariance matrix.
\begin{theorem}[Lachi\`eze-Rey, Peccati -- 2013] 
	\label{lachiezereypeccati}
	Let $\tilde{U}_j$ be the random variable appearing in \eqref{randomtostudy} and let $Z$ denote a random variable with the $\mathcal{N}(0,1)$ distribution. Then, there exists a universal constant $C_0 = C_0(n)\in (0,\infty)$, depending uniquely on $n$, such that
	\begin{eqnarray*}
		d_W\left(\tilde{U}_j,Z \right) & \leq & C_0(n)\left\{ \max_1 \norm{\tilde{h}_{j,p}\star_r^\ll \tilde{h}_{j,p}}_{L^2\left( \mu_t^{2p -r-\ell}\right) } + \max_2 \norm{\tilde{h}_{j,p}\star_r^\ll \tilde{h}_{j,q}}_{L^2\left( \mu_t^{p+q -r-\ell}\right) } \right.  \\
		&& \left.  \qquad\qquad\qquad\qquad\qquad\qquad\qquad\qquad\qquad\qquad\qquad\qquad + \max_{p=1,\ldots ,n} \norm{\tilde{h}_{j,p}}_{L^4\left( \mu_t^{p}\right) }^2\right\},
	\end{eqnarray*}
	where $\displaystyle{\max_1}$ ranges over all $2\leq p  \leq n$, and all pairs $(r,\ell)$ such that $r\in\{1,\ldots ,p\}$ and $1\leq \ell \leq r\wedge (p-1)$, whereas $\displaystyle{\max_2}$ ranges over all $1\leq p<q \leq n$ and all pairs $(r,\ell)$ such that $r\in\{1,\ldots ,p\}$ and $\ell\in\{1,...,r\}$.
\end{theorem}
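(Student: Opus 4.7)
The plan is to follow the Stein--Malliavin strategy on the Poisson space developed by Peccati--Sol\'e--Taqqu--Utzet, combined with the product formula for multiple Poisson integrals. First, I would invoke Stein's method: for any centered, square-integrable $F$, one has
\[
d_W(F,Z) \leq \sup_{g} \left| \E[g'(F)] - \E[F g(F)] \right|,
\]
where the supremum is over solutions $g$ of the Stein equation associated with Lipschitz test functions (these are known to be $C^1$ with bounded first derivative and bounded Lipschitz second derivative). Applied to $F = \tilde{U}_j$, the right-hand side can be written, via the Malliavin integration-by-parts formula on Poisson space, as
\[
\E[g'(F)] - \E[F g(F)] = \E\!\left[ g'(F) \bra{1 - \langle DF, -DL^{-1}F \rangle_{L^2(\mu_t)}} \right] - \mathcal{R},
\]
where $D$ is the (difference-operator) Malliavin derivative, $L^{-1}$ is the pseudo-inverse of the Ornstein--Uhlenbeck generator, and $\mathcal{R}$ is a remainder coming from the fact that on the Poisson space $D$ is an \emph{add-one cost} operator, so a second-order Taylor expansion produces extra terms involving $(D_x F)^2$.

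Next, I would exploit the chaotic decomposition $\tilde{U}_j = \sum_{p=1}^n I_p(\tilde{h}_{j,p})$ to compute the two ingredients explicitly. Using $D_x I_p(\tilde{h}_{j,p}) = p\, I_{p-1}(\tilde{h}_{j,p}(\cdot,x))$ and $-DL^{-1} I_p(\tilde{h}_{j,p}) = I_{p-1}(\tilde{h}_{j,p}(\cdot,x))$, the carr\'e-du-champ expression becomes
\[
\langle DF, -DL^{-1}F \rangle_{L^2(\mu_t)} = \sum_{p,q=1}^n p \int_\M I_{p-1}\!\bra{\tilde{h}_{j,p}(\cdot,x)} I_{q-1}\!\bra{\tilde{h}_{j,q}(\cdot,x)} \mu_t(dx).
\]
Applying the product formula for multiple Poisson integrals to each integrand expands these products as linear combinations of higher-order integrals whose kernels are precisely the star-contractions $\tilde{h}_{j,p} \star_r^{\ell} \tilde{h}_{j,q}$, indexed by the admissible pairs $(r,\ell)$ described in the theorem. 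The purely deterministic term produced by this expansion equals $\Var(\tilde{U}_j) = 1$ and therefore cancels the $1$ in the integration-by-parts expression. What remains is a sum of genuine multiple integrals $I_k(\tilde{h}_{j,p} \star_r^\ell \tilde{h}_{j,q})$ with $k \geq 1$, which by Cauchy--Schwarz, boundedness of $g'$, and the isometry property are controlled in $L^2$ by the two max terms $\max_1$ and $\max_2$ in the statement.

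Finally, I would handle the remainder $\mathcal{R}$ generated by the discrete nature of $D$ (Mecke/Campbell formula gives $\mathbb{E}[g(F+D_xF)-g(F)-g'(F)D_xF]$-type second-order errors integrated against $\mu_t$). Using boundedness of $g''$ together with $\E\!\int_\M (D_x F)^2 |D_x F| \, \mu_t(dx)$ and H\"older's inequality, this contribution is bounded by $\sum_p \|\tilde{h}_{j,p}\|_{L^4(\mu_t^p)}^2$, producing the third term in the theorem. Collecting all bounds yields the stated inequality with an absolute constant $C_0(n)$ depending only on $n$ through the combinatorics of the product formula. The main obstacle is step three: carefully tracking every term arising from the product formula, verifying that the deterministic part truly reconstructs the variance (this is where the choice of normalization $\tilde{h}_{j,p} = \sigma_j^{-1} h_{j,p}$ matters), and isolating the correct star-contractions in the index ranges $r\in\{1,\ldots,p\}$, $\ell\in\{1,\ldots,r\wedge(p-1)\}$ (for $p=q$) versus $\ell\in\{1,\ldots,r\}$ (for $p<q$), which reflects whether the resulting integral can collapse to a constant or not.
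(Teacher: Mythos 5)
The paper itself offers no proof of this statement: Theorem \ref{lachiezereypeccati} is quoted directly from Lachi\`eze-Rey and Peccati \cite{lrp} (which in turn builds on the Stein--Malliavin bound of \cite{PSTU}), so there is no internal argument to compare yours against. Your sketch faithfully reconstructs the proof from that source and is correct in outline: the Stein equation bound, the integration-by-parts identity with the add-one-cost remainder, the chaos computations $D_xI_p(\tilde h_{j,p})=pI_{p-1}(\tilde h_{j,p}(\cdot,x))$ and $-D_xL^{-1}I_p(\tilde h_{j,p})=I_{p-1}(\tilde h_{j,p}(\cdot,x))$, the product formula generating exactly the star-contractions with the stated $(r,\ell)$-ranges (the exclusion $\ell\leq r\wedge(p-1)$ for $p=q$ being precisely what removes the constant contraction that cancels against the unit variance), and the remainder term producing the $L^4$ contributions. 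The only minor imprecision is that the second PSTU term is $\int\mathbb{E}\bigl[(D_xF)^2\,\bigl|D_xL^{-1}F\bigr|\bigr]\mu_t(dx)$ rather than involving $|D_xF|$ itself, and its estimation via Cauchy--Schwarz yields both $L^4$-norm terms and further contraction terms absorbed into the two maxima; this does not affect the validity of your argument.
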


\subsection{Needlets on compact manifolds}\label{sub:needlet}
This subsection is concerned with the construction of needlet systems over compact homogeneous manifolds, following \cite{knp,gelpes} (cf. also \cite{pesenson}).  
Let $\left( \M,g \right)$ be a smooth compact homogeneous Riemannian manifold of dimension $d$, with no boundaries. Let $\Lm$ be the Laplace operator over $\M$: in many cases, such as the $d$-dimensional sphere, projective spaces and other examples (cf. \cite{gelpes}), it can coincide with $-\Delta_{\M}$, the Laplace-Beltrami operator on $\M$ with respect to $g$. The set $\left\{\eigen_q \colon q \geq 0\right\}$ is the set of eigenvalues associated to the eigenfunctions $\left\{u_q \colon q\geq 0\right\}$ solving, for $ q\geq0$, the second-order differential equation $\bra{\Lm - \eigen_q}u_q =0$. From now on, the eigenvalues will be ordered so that $0<\eigen_1 \leq \eigen_2 \leq  \cdots$.
Let $dx$ be the uniform Lebesgue measure over $\M$. As is well-known in the literature (cf. \cite{gm2,gelpes}), the set of eigenfunctions $\left\{u_q \colon q \geq 0\right\}$ forms a complete orthonormal basis for the function space $L^2\bra{\M}=L^2\bra{\M,dx}$, such that 
\begin{equation*}
	\langle u_q, u_{q\prime} \rangle_{\Ltwo} = \int_{\M} u_q\bra{x} \cc{u_{q\prime}\bra{x}} dx = \delta_{q}^{q\prime},
\end{equation*}
where $\delta$ is the Kronecker delta function.  Every function $f \in L^2\bra{\M}$ can therefore be decomposed in terms of its harmonic coefficients, given by $a_q = \langle f, u_p\rangle_{\Ltwo}$, so that 
\begin{equation*}
	\sum_{q \geq 0}\abs{a_q}^2=\norm{f}^2_{\Ltwo}. 
\end{equation*}
Therefore, the following harmonic expansion holds in the $L^2$-sense:
\begin{equation*}
	f\bra{x}=\sum_{q\geq 1}a_q u_q \bra{x}, \quad \forall x \in \M.
\end{equation*}
Our aim is to define a wavelet system over $\bra{\M,g}$ describing a tight frame over $\M$. Recall that a frame over $\M$ can be defined as a countable set of functions, i.e., $\bbra{e_i \colon i\geq 0}$, such that, for any $f \in \Ltwo$, 
\begin{equation*}
	c \norm{f}_{\Ltwo} \leq \sum_{i \geq 0} \abs{ \langle f, e_i\rangle_{\Ltwo}}^2 \leq C \norm{f}_{\Ltwo}.
\end{equation*}
A frame is said to be tight if $c=C$, in which case $C$ is referred to as the tightness constant. 
Following \cite{pesenson} and the references therein (see also \cite{gm1,gm2,npw1}), our aim here is to build a frame $\bbra{\psi_{j,k}}$ on $\Ltwo$: fix $Q \in \mathbb{N}$ and consider the set $\mathcal{H}_{Q}$, given by the span of eigenfunctions $u_q$ such that $q\leq Q$, also called the space of band-limited functions on $\M$ with bandwidth $Q$. Define an $\epsilon$-lattice over $\M$, given by a set of points $\bbra{\xi_k}$ on $\M$, which can be viewed as the centers of balls such that: 
\begin{enumerate}
	\item the balls of radius $\epsilon/4$ are disjoint;
	\item the union of the balls of radius $\epsilon/2$ forms a cover of $\M$;
	\item their multiplicity is not greater than a given $N_{\M}<\infty$.
\end{enumerate}
As proved in \cite{gelpes}, see also \cite{knp,pesenson}, given $0<\delta<1$, there exists a constant $c_0$ depending on $\M$ and $\delta$ such that, if $\epsilon=c_0 Q^{-1/2}$, there exists a set of weights $\lambda_{\xi_k}$ associated to the $\epsilon$-lattice $\chi_Q=\bbra{\xi_k}$, $k=1,\ldots, \text{card}\bra{\chi_Q}$,  such that for any $f\in\mathcal{H}_Q$ it holds that
\begin{equation*}
	\Mint{f\bra{x}dx}=\sum_{\xi_k\in\chi_Q}=\lambda_\xi f\bra{\xi_k}.
\end{equation*}
Let us define the scale parameter $B>1$ and the so-called window function $b:\reals \mapsto \reals^+$ enjoying the following crucial properties:
\begin{enumerate}
	\item $b$ has compact support in $\sbra{B^{-1},B}$;
	\item $b \in C^\infty \bra{\reals}$;
	\item the following partition of unity property holds:
	\begin{equation*}
		\sum_{j>1} b^2\bra{cB^{-2j}}, \quad \text{for any } c>1.
	\end{equation*}
\end{enumerate} 
Let $\Lambdaj = \bbra{q: \ll_q\in \sbra{B^{2\bra{j-1}},B^{2\bra{j+1}}}}$, $f\in \Ltwo$ and introduce the notation $P_q\bra{x,y}=u_q\bra{x}\overline{u_q\bra{y}}$. Consider the sequence of projection operators $A_j$ onto $\mathcal{H}_{B^{j+1}}$, given by
\begin{eqnarray*}
	&& A_0\sbra{f} = \Mint{f\bra{x}dx};\\
	&& A_j\sbra{f}\bra{x}  = \Mint{A_j\bra{x,y}f\bra{y}dy}, \quad j\geq 1, 
\end{eqnarray*}
where the kernels $A_j\bra{x,y}$ are defined by
\begin{equation*}
	A_j\bra{x,y}=\sum_{q\in \Lambdaj} b^2\bra{\barg}P_q\bra{x,y}.
\end{equation*}
Consider the so-called needlet operator with kernel $M_j\bra{x,y}$, given by
\begin{equation*}
	M_j\bra{x,y}=\sumq \bfun{\barg}P_q\bra{x,y}.
\end{equation*}
Using the orthonormality property of $\bbra{u_q \colon q \geq 0}$, we get
\begin{equation*}
	A_j\bra{x,y}=\Mint{M_j\bra{x,z}M_j\bra{z,y}dz}.
\end{equation*}
Following \cite{knp}, note that the kernel $M_j$ satisfies $z \mapsto M_j\bra{x,z} \in \mathcal{H}_{B^{2j}}$. Theorem 6.1 in \cite{gelpes} states that if $f_1,f_2 \in \mathcal{H}_n$, then $f_1f_2\in \mathcal{H}_{cn}$, where $c$ is a positive constant. Let $\mathcal{Z}_j = \mathcal{H}_{cB^{2j}}$ and define $K_j=\text{card}\bra{\mathcal{Z}_j}$. The kernel $A_j$ and its action on $\Lm$ can be represented respectively as
\begin{eqnarray*}
	&& A_j\bra{x,y}  = \sumk \lambda_{j,k} M_{j}\bra{x,\xi_{j,k}}M_{j}\bra{\xi_{j,k},y};\\
	&& A_j\sbra{f}\bra{x} = \sumk \sqrt{\lambda_{j,k}} M_j\bra{x,\xi_{j,k}}\Mint{\sqrt{\lambda_{j,k}}M_{j}\bra{\xi_{j,k},y}f\bra{y}dy}.
\end{eqnarray*}
For $x \in \M$, the needlet (scaling) function is given by 
\begin{equation*}
	\needlet{x} = \sqrt{\cubew} M_j\bra{x,\cubep}=\sqrt{\cubew}\sumq \bfun{\barg}P_q\bra{x,\xi_{j,k}}.
\end{equation*}
For $f \in \Ltwo$, $j\geq 0$ and $k=1,\ldots, K_j$, the needlet coefficient corresponding to $\psi_{j,k}$ is given by
\begin{equation*}
	\betac=\langle f, \psi_{j,k} \rangle_{\Ltwo} =\sqrt{\cubew}\sumq \bfun{\barg}a_q u_{q}\bra{\xi_{j,k}},
\end{equation*}
so that the needlet projection of $f$ onto $\mathcal{Z}_j$ can be rewritten as 
\begin{equation*}
	A_j\sbra{f}\bra{x}=\sumk \betac \needlet{x}, \quad \forall x \in \M.
\end{equation*}
Needlets are characterized by relevant concentration properties, both in the frequency and spatial domains. Indeed, each needlet takes values over a compact set of frequencies, namely, $q \in \Lambda_j$, this being a consequence of the compact support property of the weight function $b$. In addition, the upcoming property follows from the differentiability of the weight function $b$ (see, for instance, \cite{gm2,knp}). For any $x \in \M$ and every $\eta \in \mathbb{N}$, there exists $C_{\eta} > 0$ such that 
\begin{equation}
\label{locapropofneedlets}
\abs{\needlet{x}} \leq \frac{C_{\eta}B^{j\frac{d}{2}}}{\left( 1+B^{jd}d_{\M}\left(x,\xi_{j,k} \right) \right)^{\eta} },
\end{equation}
where $d_{\M}$ denotes a geodesic metric on $\M$. This property emphasizes the fact that needlets are not negligible just inside the pixel surrounding the corresponding cubature point of area $\lambda_{j,k}$.
\\~\\
This above inequality allows one to obtain explicit bounds on the $L^p$-norms of needlets (cf. \cite{knp, npw2}), more specifically
\begin{equation}
\label{eqn:Lpnorm}
c_p B^{jd\bra{\frac{1}{2}-\frac{1}{p}}}\leq \norm{\psi_{j,k}}_{\Lp}\leq C_p B^{jd\bra{\frac{1}{2}-\frac{1}{p}}}.
\end{equation}  
~\\
The following lemma claims another result based on the spatial localization property. 

\begin{lemma}\label{lemma:needloc}
	For any $x \in \sphere$, $q \geq 2$,  $k_{i_1} \neq k_{i_2}$, $i_1 \neq i_2=1,\ldots,q$ and  $\eta \geq 2$, there exists $C_\eta>0$ such that
	\begin{equation*}\label{eqn:needloclemma}
		\int_{\M} \prod_{i=1}^q \psi_{j,k_i}\bra{x}dx \leq \frac{C_{\eta}B^{dj\bra{q-1}}}{\bra{1+B^{dj}\Delta}^{\eta\bra{q-1}}},
	\end{equation*}
	where 
	$$
	\Delta=\min _{i_1,i_2 \in \{ 1,\ldots,q\}, i_1\neq i_2} d_\M (\xi_{j,k_{i_1}},\xi_{j,k_{i_2}}).
	$$
\end{lemma}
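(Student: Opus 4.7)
The plan is to combine the spatial concentration estimate \eqref{locapropofneedlets} with a pigeonhole argument based on the triangle inequality. The key geometric observation is that, for any fixed $x \in \M$, at most one of the cubature points $\xi_{j,k_i}$, $i = 1, \ldots, q$, can lie within geodesic distance $\Delta/2$ of $x$: indeed, if two of them did, the triangle inequality would force their mutual distance to be strictly less than $\Delta$, contradicting the definition of $\Delta$.

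Based on this, I would, for each $x$, relabel the indices so that $d_1(x) \leq d_2(x) \leq \cdots \leq d_q(x)$, where $d_i(x) := d_{\M}(x, \xi_{j,k_i})$; the observation above then gives $d_i(x) \geq \Delta/2$ for every $i \geq 2$. Next, I would apply \eqref{locapropofneedlets} to each of the $q$ factors, use the trivial bound $(1+B^{jd} d_1(x))^\eta \geq 1$ for the nearest needlet, and substitute $d_i(x) \geq \Delta/2$ into the decay factor of the remaining $q-1$ factors. This produces the pointwise bound
\[
\prod_{i=1}^q |\psi_{j,k_i}(x)| \;\leq\; \frac{C_\eta^{\,q}\, B^{jdq/2}}{(1 + B^{jd}\Delta/2)^{\eta(q-1)}},
\]
which is uniform in $x$ (the reordering is $x$-dependent, but the resulting right-hand side is not). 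Since $\M$ is compact, integration over $\M$ only multiplies this estimate by the finite factor $\mathrm{vol}(\M)$.

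Finally, one cleans up constants: the inequality $1 + B^{jd}\Delta/2 \geq (1+B^{jd}\Delta)/2$ converts the denominator to the form $(1+B^{jd}\Delta)^{\eta(q-1)}$ stated in the lemma up to a factor of $2^{\eta(q-1)}$, and the elementary estimate $B^{jdq/2} \leq B^{jd(q-1)}$, valid for $q \geq 2$, matches the numerator. The main obstacle is purely conceptual: it is essential to identify the triangle-inequality pigeonhole in the first step, after which the argument reduces to routine estimates. I note in passing that this route actually yields the sharper numerator $B^{jdq/2}$ rather than $B^{jd(q-1)}$, but the weaker form stated in the lemma is all that will be needed in the subsequent analysis.
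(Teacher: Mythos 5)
Your proof is correct. Note that the paper itself gives no proof of Lemma \ref{lemma:needloc}: it only states that the result first appeared in the spherical setting in the cited reference and that the original proof extends to compact manifolds, so your argument is a self-contained substitute rather than a variant of anything in the text. All the key steps check out: the triangle-inequality pigeonhole (two cubature points within $\Delta/2$ of a common $x$ would be at mutual distance $<\Delta$, contradicting the minimality of $\Delta$) is valid because the hypothesis $k_{i_1}\neq k_{i_2}$ guarantees the cubature points are distinct; the pointwise bound obtained by applying \eqref{locapropofneedlets} to each factor, discarding the decay of the nearest factor and inserting $d_i(x)\geq \Delta/2$ into the remaining $q-1$ factors, is indeed uniform in $x$ even though the relabeling is $x$-dependent; compactness of $\M$ justifies the factor $\operatorname{vol}(\M)$; and the clean-up inequalities $1+B^{jd}\Delta/2\geq (1+B^{jd}\Delta)/2$ and $B^{jdq/2}\leq B^{jd(q-1)}$ (the latter precisely because $q\geq 2$ and $B>1$) are correct, with a resulting constant depending on $\eta$, $q$ and $\M$, which is consistent with the quantifier structure of the statement. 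Your closing observation is also right: this route yields the sharper numerator $B^{jdq/2}$, which coincides with the stated one only at $q=2$. In fact one can do better still by not bounding the integral by $\operatorname{vol}(\M)$ times the sup: splitting $\M$ into the regions where each $\psi_{j,k_i}$ is the nearest needlet and integrating that single factor, using $\norm{\psi_{j,k}}_{L^1\bra{\M}}\lesssim B^{-jd/2}$ from \eqref{eqn:Lpnorm}, gives a numerator of order $B^{jd(q-2)/2}$; but, as you note, the weaker stated form is all that is needed downstream (e.g., in Corollary \ref{december1963}).
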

\noindent This result first appeared in \cite{durastanti6} for needlets over the $d$-dimensional sphere $\sphere$. The original proof can be easily extended to the compact manifold framework and, therefore, is omitted here for the sake of brevity. Note that, as a straightforward consequence, the following result holds.
\begin{corollary}\label{december1963}
		For any $x \in \sphere$, any even natural number $q \geq 2$ and any constants $c_{k_i}>0$, $i=1,\ldots, q$,  it holds that 
\begin{equation*} 
\int_{\M} \sum_{k_1, \ldots, k_q=1}^{K_j} \prod_{i=1}^q c_{k_{i}}\psi_{j,k_i}\bra{x}dx = O\bra{\norm{\psi_{j,k}}_{\Lq}^q}.
\end{equation*}   
\end{corollary}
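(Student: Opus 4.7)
The plan is to exploit the elementary algebraic identity
\begin{equation*}
\sum_{k_1,\ldots,k_q=1}^{K_j}\prod_{i=1}^{q} c_{k_i}\psi_{j,k_i}\bra{x}
= \left( \sum_{k=1}^{K_j} c_k \psi_{j,k}\bra{x} \right)^{q},
\end{equation*}
which converts the multiple sum over $q$-tuples into the $q$-th power of a single linear combination of needlets. Since $q$ is even by assumption, the right-hand side is pointwise nonnegative and equals $\abs{F_j(x)}^q$ with $F_j(x) := \sum_{k=1}^{K_j} c_k \psi_{j,k}\bra{x}$; integrating over $\M$ therefore reduces the problem to estimating $\norm{F_j}_{\Lq}^q$.

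First, I would separate the multiple sum into the diagonal part $k_1=\cdots=k_q$ and the off-diagonal part. The diagonal contribution equals $\sum_{k=1}^{K_j} c_k^q \norm{\psi_{j,k}}_{\Lq}^q$, which by \eqref{eqn:Lpnorm} is of the claimed order. For the off-diagonal contribution, I would partition the $q$-tuples according to the multiplicity pattern of $(k_1,\ldots,k_q)$: after collapsing equal entries, each such pattern yields a product of needlets indexed by distinct cubature points, to which Lemma \ref{lemma:needloc} applies directly. Any powers greater than one, arising from repeated indices in a nontrivial pattern, can be extracted beforehand by means of the uniform bound $\abs{\psi_{j,k}\bra{x}} \lesssim B^{jd/2}$ inherited from \eqref{locapropofneedlets}.

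To conclude, the off-diagonal estimates are summed over the lattice. Organizing the distinct cubature points into concentric shells around a reference point $\xi_{j,k_1}$ and recalling that $\bbra{\xi_{j,k}}$ is an $\epsilon$-lattice with spacing $\epsilon \sim B^{-j}$, the number of points at geodesic distance $\sim rB^{-j}$ grows polynomially (of order $r^{d-1}$), whereas Lemma \ref{lemma:needloc} furnishes decay of order $(1+r)^{-\eta(q-1)}$. Choosing $\eta$ large enough turns the resulting discrete sum into a convergent series and keeps the off-diagonal contribution at most of the same order as the diagonal one.

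The main obstacle, I expect, is the combinatorial bookkeeping in the off-diagonal step. For a generic tuple $(k_1,\ldots,k_q)$ with a prescribed equivalence pattern, the reduced integrand takes the form $\prod_{m} \psi_{j,k_{i_m}}^{a_m}$ with $\sum_m a_m = q$ and is not literally of the form handled by Lemma \ref{lemma:needloc}. One has to verify carefully that, after peeling off the powers $a_m - 1$ via the uniform $L^\infty$ bound and applying the lemma to the remaining product of distinct needlets, the residual estimate still dominates the count of tuples sharing the same multiplicity pattern, so that the total off-diagonal contribution stays within the announced $O(\norm{\psi_{j,k}}_{\Lq}^q)$ budget.
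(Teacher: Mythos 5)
Your proposal follows essentially the same route as the paper's own (very terse) proof: split the multiple sum into its diagonal part, which \eqref{eqn:Lpnorm} shows is of the claimed order, and its off-diagonal part, which is shown to be negligible via the localization estimate of Lemma \ref{lemma:needloc}. The extra care you devote to tuples with partially repeated indices --- peeling off excess powers with the uniform bound inherited from \eqref{locapropofneedlets} before invoking the lemma on the remaining distinct needlets, then shell-counting over the $\epsilon$-lattice --- is a refinement of a step the paper's proof passes over silently (its one-line application of Lemma \ref{lemma:needloc} literally covers only all-distinct tuples), not a different argument.
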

\begin{proof}
	Using Lemma \ref{lemma:needloc} yields
\begin{equation*} 
\int_{\M} \sum_{k_1, \ldots, k_q=1}^{K_j} \prod_{i=1}^q c_{k_{i}}\psi_{j,k_i}\bra{x}dx =  \sum_{k=1}^{K_j}  c^q_{k} \int_{\M} \psi_{j,k}^q\bra{x}dx+o\bra{\frac{B^{dj}}{\bra{1+B^{dj}\Delta}^{\eta}}}. 
\end{equation*}   
Using \eqref{eqn:Lpnorm} concludes the proof.
\end{proof}

\subsection{Besov spaces on compact manifolds}\label{sub:besov}
In this subsection, we provide an operative definition of Besov spaces in terms of their approximation properties (for further details and discussions, see, for instance, \cite{bkmpAoSb,gm3,WASA}). Let the approximation error obtained by replacing $f \in \Lr$ by $P_k \in \mathcal{H}_k$ be given by
\begin{equation*}
E_k \bra{f,r} = \inf_{P_k \in \mathcal{H}_k}\norm{f-P_k}_{\Lr}.
\end{equation*}
The Besov space $\besov$, of parameters $r\in \sbra{1,\infty}$, $q\in \sbra{1,\infty}$, $s\geq d/r$, is defined as the functional space such that, for any $f \in \besov$, 
\begin{enumerate}
	\item $f\in \Lr$;
	\item $\sum_{k=0}^{\infty}\frac{1}{k}\bra{k^s E_k\bra{f,r}}^q<\infty$.
\end{enumerate}
Following \cite{bkmpAoSb}, by using standard concentration arguments, Condition (2) is equivalent to 
\begin{equation*}
\sum_{j\geq 0} \bra{B^{js}E_{B^j}\bra{f,r}}^q<\infty.
\end{equation*}	
The Besov space norm $\norm{\cdot}_{\besov}$ is defined as follows
\begin{equation*}
\norm{f}_{\besov} = \left\{
\begin{array}{ll}
\norm{f}_{\Lr} + \sbra{\sum_{j\geq 0} B^{qj\bra{s+d\bra{\frac{1}{2}-\frac{1}{r}}}}\bra{\sumk\abs{\betac}^r}^{\frac{q}{r}}}^{\frac{1}{q}} \  \text{if }q<\infty \\
\norm{f}_{\Lr}+\sup_{j}B^{j\bra{s+d\bra{\frac{1}{2}-\frac{1}{r}}}}\bra{\sumk \abs{\betac}^r}^{\frac{1}{r}}\ \ \ \ \ \ \ \ \ \text{if }q=\infty
\end{array},
\right.
\end{equation*}
so that, if $r,q>1$ and $\max\bra{0, \frac{1}{r}-\frac{1}{q}}<\infty$, $f \in \besov$ if and only if $\norm{f}_{\besov}<\infty$. 
\\~\\
The parameters of the the Besov space $\besov$ can be interpreted as follows:
\begin{itemize}
\item Since $f \in \Lr$, for any $j >0$, the set of needlet coefficients $\left\lbrace \betac \colon k=1,\ldots , K_j\right\rbrace $ belongs to the set of $r$-summable sequences $\ell^r\left(\M \right) $;
\item $q$ controls the weighted $q$-norm along the whole scale of coefficients at $j$;
\item $s$ is the smoothness of the decay rate of the $q$-norm of the needlet coefficients across the scale $j$.
\end{itemize}
Given that, straightforward calculations show that $f \in \besov$ if and only if, for every $j\geq1$
\begin{equation*}
\bra{\sumk \bra{\abs{\betac}\norm{\psi_{j,k}}_{\Lr}}^r}^{\frac{1}{r}}=w_j B^{-js},
\end{equation*}
where $w_j \in \ell^q\left(\M \right) $. Using \eqref{eqn:Lpnorm}, we get
\begin{equation}
\label{eqn:besovcoeff}
\bra{\sumk \bra{\abs{\betac}}^r}^{\frac{1}{r}}=O\bra{B^{-j\bra{s+d\bra{\frac{1}{2}-\frac{1}{r}}}}}.
\end{equation}


\section{Main results}\label{sec:proof}

\noindent In the main result of this paper, namely Theorem \ref{maintheoremconvergenceinlaw}, we establish a quantitative central limit theorem with explicit rates of convergence for each possible subcase. The following theorem is an equivalent restatement of Theorem \ref{maintheoremconvergenceinlaw}, allowing one to better understand how the rates of convergence are derived. In this section, we will first prove the forthcoming result and then show how it implies Theorem \ref{maintheoremconvergenceinlaw} and consequently Theorem \ref{maintheoremconvergenceinlawdepoissonized} by using Lemma \ref{depoisslemma}, hence completing the proofs of our main results.
\begin{theorem}
\label{maintheoremconvergenceinlaw2}
Let $\tilde{U}_j$ be the random variable appearing in \eqref{randomtostudy} and let $Z$ denote a random variable with the $\mathcal{N}(0,1)$ distribution. Then,
\begin{enumerate}
\item[(i)] If $R_tB^{-j(2s+d)} \rightarrow \infty$ as $ j\rightarrow \infty$, then
\begin{eqnarray}
\label{firstcaseoftheorem31}
d_W\left(\tilde{U}_j,Z \right) & \lesssim &   \sum_{p=2}^{n}\sum_{r=1}^{p}\sum_{\ell =1}^{r\wedge (p-1)} \left( R_tB^{-j(2s+d)}\right)^{1-p} \left(R_t B^{-jd} \right)^{\frac{\ell -r}{2}} B^{-j\frac{d}{2}} \nonumber \\
&&  + \sum_{q=2}^{n}\sum_{p=1}^{q-1}\sum_{r=1}^{p}\sum_{\ell =1}^{r} \left( R_tB^{-j(2s+d)}\right)^{1-\frac{p+q}{2}} \left(R_t B^{-jd} \right)^{\frac{\ell -r}{2}} B^{-j\frac{d}{2}} \nonumber \\
&& + \sum_{p=1}^{n} \left( R_tB^{-j(2s+d)}\right)^{1-p} \left(R_t B^{-jd} \right)^{-\frac{p}{2}} B^{-j\frac{d}{2}}.
\end{eqnarray}

\item[(ii)] If $R_tB^{-j(2s+d)} \rightarrow 0$ or $R_t^{-1}B^{j(2s+d)} = O(1)$ as $ j\rightarrow \infty$, then
\begin{eqnarray}
\label{secondcaseoftheorem31}
d_W\left(\tilde{U}_j,Z\right) & \lesssim &   \sum_{p=2}^{n}\sum_{r=1}^{p}\sum_{\ell =1}^{r\wedge (p-1)} \left( R_tB^{-j(2s+d)}\right)^{n-p} \left(R_t^{-1}B^{jd} \right)^{\frac{r-\ell}{2}} B^{-j\frac{d}{2}}\nonumber \\
&&   + \sum_{q=2}^{n}\sum_{p=1}^{q-1}\sum_{r=1}^{p}\sum_{\ell =1}^{r}  \left( R_tB^{-j(2s+d)}\right)^{n-\frac{p+q}{2}} \left(R_t^{-1}B^{jd} \right)^{\frac{r-\ell}{2}} B^{-j\frac{d}{2}} \nonumber \\
&& + \sum_{p=1}^{n} \left( R_tB^{-j(2s+d)}\right)^{n-p} \left(R_t^{-1}B^{jd} \right)^{\frac{p}{2}} B^{-j\frac{d}{2}}.
\end{eqnarray}
\end{enumerate}
\end{theorem}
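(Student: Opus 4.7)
The plan is to apply the Stein--Malliavin bound of Theorem \ref{lachiezereypeccati}, which controls $d_W(\tilde U_j, Z)$ by a sum of star-contraction norms $\|\tilde h_{j,p}\star_r^\ell \tilde h_{j,q}\|_{L^2(\mu_t^{p+q-r-\ell})}$ and squared $L^4$-norms $\|\tilde h_{j,p}\|_{L^4(\mu_t^p)}^2$ of the normalized kernels $\tilde h_{j,p}=\sigma_j^{-1}h_{j,p}$. The three families of terms appearing on the right-hand sides of \eqref{firstcaseoftheorem31} and \eqref{secondcaseoftheorem31} correspond respectively to same-order ($p=q$) contractions, mixed-order ($p<q$) contractions, and $L^4$-norms in Theorem \ref{lachiezereypeccati}, so the task reduces to estimating each un-normalized kernel norm in terms of $R_t$, $B^j$, $s$ and $d$, together with a matching asymptotic lower bound on $\sigma_j^2$ in each regime.

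For the kernel-norm estimates I would exploit the tensor-product form \eqref{eqn:kernelred}, $h_{j,p}=\binom{n}{p}R_t^{n-p}\sum_k \beta_{j,k}^{n-p}\psi_{j,k}^{\otimes p}$, so that both the contractions and the $L^4$-norms expand into multiple sums over needlet indices in which every integration against the control measure produces a factor of the form $\int \psi_{j,k_1}\cdots\psi_{j,k_m} f\,dx$. The spatial localization property \eqref{locapropofneedlets}, applied through Lemma \ref{lemma:needloc} and Corollary \ref{december1963} with the exponent $\eta$ taken sufficiently large relative to the number of remaining summations, reduces these multi-index sums to essentially diagonal sums of the schematic form $\sum_k |\beta_{j,k}|^{\alpha} \|\psi_{j,k}\|_{L^\gamma(\mu_t)}^{\delta}$ for explicit $\alpha,\gamma,\delta$. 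The $L^p$-norm bound \eqref{eqn:Lpnorm} combined with the Besov characterization \eqref{eqn:besovcoeff}, namely $\sum_k |\beta_{j,k}|^r \lesssim B^{-jr(s+d(1/2-1/r))}$, then converts each diagonal sum into an explicit polynomial expression in $R_t$, $B^{jd}$ and $B^{js}$. The outcome is a schematic bound of the form $\|h_{j,p}\star_r^\ell h_{j,q}\|_{L^2}^{2}\lesssim R_t^{p+q-\ell}B^{jd(\ell-r)}\cdot(\text{Besov factor})$, together with analogous expressions for the squared $L^4$-norms.

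For the variance I would invoke Proposition \ref{lowerboundvarianceasymp} (anticipated in the Remark following \eqref{randomtostudy}), which asserts that only three scenarios can occur: first-chaos domination, $n$-th-chaos domination, or asymptotic equivalence of all chaoses. A direct comparison through \eqref{defvariance} of the first-chaos contribution $R_t\|h_{j,1}\|_{L^2(\mu_t)}^{2}$ with the top-chaos contribution $R_t^n\|h_{j,n}\|_{L^2(\mu_t^n)}^{2}$ shows that their ratio is, up to constants, a power of $R_tB^{-j(2s+d)}$. Consequently regime (i) coincides with first-chaos domination and regime (ii) with top-chaos domination, and the proposition supplies the required asymptotic lower bound for $\sigma_j^2$ in each case. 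Dividing the un-normalized kernel-norm bounds obtained in the previous step by the appropriate power of $\sigma_j$ and gathering the results according to $(p,q,r,\ell)$ then produces exactly the three families of summands on the right-hand sides of \eqref{firstcaseoftheorem31} and \eqref{secondcaseoftheorem31}.

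The main obstacle, and the source of most of the technical work, is the bookkeeping in the contraction estimates: carefully tracking which combinations of $R_t^{a}$, $B^{jda'}$ and $B^{-jsa''}$ each pair $(r,\ell)$ contributes, and verifying that the off-diagonal residuals produced by Lemma \ref{lemma:needloc} are absorbed into the diagonal leading order for every choice of degrees $(p,q)$. Once Theorem \ref{maintheoremconvergenceinlaw2} is in place, Theorem \ref{maintheoremconvergenceinlaw} is recovered by identifying, in each of the two regimes, the dominant summand among the $(p,q,r,\ell)$-indexed families, yielding the rates $R_t^{-1/2}B^{js}$ and $B^{-jd/2}$ respectively; Theorem \ref{maintheoremconvergenceinlawdepoissonized} then follows from Lemma \ref{depoisslemma}, with the additional $m^{-1/4}$ correction being precisely the de-Poissonization error.
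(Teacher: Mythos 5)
Your proposal follows essentially the same route as the paper: apply Theorem \ref{lachiezereypeccati}, bound the contraction and $L^4$ norms by expanding the tensor-product kernels \eqref{eqn:kernelred}, diagonalizing via localization (Corollary \ref{december1963}) and invoking \eqref{eqn:Lpnorm} and \eqref{eqn:besovcoeff} (this is exactly the content of Propositions \ref{propocontractions} and \ref{proponormL4}), and then normalize using the regime-dependent variance lower bounds of Proposition \ref{lowerboundvarianceasymp}. Apart from minor imprecisions in your schematic exponents (e.g.\ the factor should be $B^{jd(r-\ell)}$ with $R_t^{4n-p-q-r+\ell}$, as in \eqref{estimateusedforthencase}), the argument matches the paper's proof.
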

\begin{remark}
Observe that three terms form each summand in the above bounds: $R_tB^{-j(2s+d)}$, $R_t B^{-jd}$ and $B^{-j\frac{d}{2}}$. Note that $R_tB^{-j(2s+d)}$ is proportional to $\frac{R_t}{K_j}\sum_{k=1}^{K_j}\betac^2$. Heuristically, it can be viewed as the sample variance of the $j$-th level of the needlet decomposition of the Poisson random field. The term $B^{-j\frac{d}{2}}$, as in \cite{bdmp}, can be interpreted as the expected number of observations in each pixel. Indeed, 
\begin{equation*}
\Ex\left( \operatorname{Card}\left\lbrace X_i \colon d(X_i, \cubep) \leq B^{-j} \right\rbrace \right) \simeq R_t \int_{d(X_i, \cubep) \leq B^{-j}}f(x)dx \simeq R_t B^{-jd}.
\end{equation*}
The term $B^{-j\frac{d}{2}}$ comes from the use of relation \eqref{eqn:besovcoeff} and can be viewed as a rescaling weight across the frequencies $j$. Following from these arguments, these terms can be called: effective sample variance for $R_tB^{-j(2s+d)}$, effective sample size for $R_t B^{-jd}$ and scaling factor for $B^{-j\frac{d}{2}}$.
\end{remark}
\begin{remark}
	\label{remarkonwhodominates}
As illustrated in the proof of Proposition \ref{lowerboundvarianceasymp}, it is when $R_t B^{-j(2s+d)} \rightarrow \infty$ as $ j\rightarrow \infty$ that the first chaos dominates, and it is when $R_t B^{-j(2s+d)} \rightarrow 0$ as $ j\rightarrow \infty$ that the last chaos dominates. Furthermore, when $R_t B^{-j(2s+d)} = O(1)$ as $ j\rightarrow \infty$, all the chaos contribute equally. Consequently, what differentiates between the two cases is the estimate one has to use for the variance of $\tilde{U}_j$.
\end{remark}

\begin{proof}[Proof of Theorem \ref{maintheoremconvergenceinlaw2}]
Applying Theorem \ref{lachiezereypeccati}, we can write, using Proposition \ref{propocontractions} and Proposition \ref{proponormL4},
\begin{eqnarray*}
d_W\left(\tilde{U}_j,Z \right) & \lesssim &   \sum_{p=2}^{n}\sum_{r=1}^{p}\sum_{\ell =1}^{r\wedge (p-1)} \sigma_{j}^{-2} R_t^{2n-p-\frac{r}{2} +\frac{\ell}{2}}B^{-j\left(s(2n-2p) + d\left( n -p -\frac{r}{2} +\frac{\ell}{2} -\frac{1}{2}\right)  \right) }  \\
&&   + \sum_{q=2}^{n}\sum_{p=1}^{q-1}\sum_{r=1}^{p}\sum_{\ell =1}^{r}  \sigma_{j}^{-2} R_t^{2n-\frac{p}{2}-\frac{q}{2}-\frac{r}{2} +\frac{\ell}{2}}B^{-j\left(s(2n-p-q) + d\left( n -\frac{p}{2}-\frac{q}{2} -\frac{r}{2}+\frac{\ell}{2} -\frac{1}{2}\right)  \right) } \\
&& + \sum_{p=1}^{n} \sigma_{j}^{-2} R_t^{2n-\frac{3p}{2}} B^{-j\left(s(2n-2p) + d\left( n -\frac{3p}{2} -\frac{1}{2}\right)  \right) }.
\end{eqnarray*}
As stated in Remark \ref{remarkonwhodominates}, the asymptotic behavior of $R_t B^{-j(2s+d)}$ is what dictates which is the dominant chaos order in \eqref{randomtostudy}. Based on this fact, in the case (i), where $R_tB^{-j(2s+d)} \rightarrow \infty$ as $ j\rightarrow \infty$, Proposition \ref{lowerboundvarianceasymp} implies that $\sigma_j^{2} \gtrsim R_t^{2n-1} B^{-j\left(s(2n-2) + d(n-2) \right) }$. Careful algebraic computations imply (i). In the case (ii), when $R_t B^{-j(2s+d)} \rightarrow 0$, Proposition \ref{lowerboundvarianceasymp} implies that $\sigma_j^{2} \gtrsim R_t^{n} B^{jd}$ and when $R_t B^{-j(2s+d)} =O(1)$, Proposition \ref{lowerboundvarianceasymp} implies that $\sigma_j^{2} \gtrsim R_t^{n} B^{jd}$ (in fact, we could also take $\sigma_j^{2} \gtrsim R_t^{2n-p} B^{-j\left(s(2n-2p) + d(n-p-1) \right) }$ for any $1 \leq p \leq n$ as in this case, all these quantities are asymptotically equivalent). Careful algebraic computations then imply (ii).
\end{proof}
\noindent We now prove Theorem \ref{maintheoremconvergenceinlaw} by showing how it is implied by Theorem \ref{maintheoremconvergenceinlaw2}.
\begin{proof}[Proof of Theorem \ref{maintheoremconvergenceinlaw}]
Let's start by assuming that $R_tB^{-j(2s+d)} \rightarrow \infty$ as $ j\rightarrow \infty$, which places us in case (i). Then, in the first line of \eqref{firstcaseoftheorem31}, observe that, as $p >1$, the terms $\left( R_tB^{-j(2s+d)}\right)^{1-p}$ converge to zero at a rate given by the slowest term, which occurs for $p=2$ (the smallest $p$). Similarly, as $\ell -r \leq 0$, the terms $\left(R_t B^{-jd} \right)^{\frac{\ell -r}{2}}$ converge to zero except for the cases where $\ell = r$, in which case these terms are constant. The rate for the first line is hence given by $\left( R_tB^{-j(2s+d)}\right)^{-1} \left(R_t B^{-jd} \right)^{0} B^{-j\frac{d}{2}} = R_t^{-1}B^{j(2s+\frac{d}{2})}$. Applying the same logic, the rate for the second line is going to be given by $\left( R_tB^{-j(2s+d)}\right)^{1-\frac{1+2}{2}} \left(R_t B^{-jd} \right)^{\frac{0}{2}} B^{-j\frac{d}{2}} = R_t^{-\frac{1}{2}}B^{js}$. Finally, the rate for the third line is going to be given by $\left( R_tB^{-j(2s+d)}\right)^{1-1} \left(R_t B^{-jd} \right)^{-\frac{1}{2}} B^{-j\frac{d}{2}} = R_t^{-\frac{1}{2}}$. Combining these observations finally yields
\begin{equation*}
d_W\left(\tilde{U}_j,Z \right) \lesssim   R_t^{-1}B^{j(2s+\frac{d}{2})} + R_t^{-\frac{1}{2}}B^{js} + R_t^{-\frac{1}{2}} \lesssim R_t^{-\frac{1}{2}}B^{js}.
\end{equation*}
Finally, assume $R_tB^{-j(2s+d)} \rightarrow 0$ as $ j\rightarrow \infty$. Assume also that $R_tB^{-jd} \rightarrow \infty$ as $ j\rightarrow \infty$, which places us in case (ii). Then, in the first line of \eqref{secondcaseoftheorem31}, observe that, as $p \leq n$, the term $\left( R_tB^{-j(2s+d)}\right)^{n-p}$ is going to converge to zero expect for $p =n$, for which this term is constant, hence giving the rate of convergence of these terms. Similarly, the terms $\left(R_t^{-1}B^{jd} \right)^{\frac{r-\ell}{2}}$ are going to converge to zero expect when $r = \ell$, in which case they are constant and giving the rate of convergence of these terms. So the terms in the first line are going to converge to zero at a rate given by $\left( R_tB^{-j(2s+d)}\right)^{n-n} \left(R_t^{-1}B^{jd} \right)^{\frac{0}{2}} B^{-j\frac{d}{2}} = B^{-j\frac{d}{2}}$. Similarly, the terms in the second line are going to converge to zero at a rate given by $\left( R_tB^{-j(2s+d)}\right)^{n-\frac{n-1+n}{2}} \left(R_t^{-1}B^{jd} \right)^{\frac{0}{2}} B^{-j\frac{d}{2}} = R_t^{\frac{1}{2}}B^{-j(s+d)}$. Finally, in the third line, observe that the terms $\left( R_tB^{-j(2s+d)}\right)^{n-p}$ $\left(R_t^{-1}B^{jd} \right)^{\frac{p}{2}}$ either converge to zero or are constant. So, as $B^{-j\frac{d}{2}}$ converges to zero, an upper bound for the rate of convergence of the terms of the third line is given by $B^{-j\frac{d}{2}}$ (as it is multiplied by constant terms or terms going to zero as well). If we assume that $R_tB^{-j(2s+d)} = O(1)$ as $ j\rightarrow \infty$, the only modification to the previous case is that the rate of convergence to zero of the second line is directly given by $B^{-j\frac{d}{2}}$ as the term $R_tB^{-j(2s+d)}$ converges to a constant. Combining these observations finally yields
\begin{equation*}
d_W\left(\tilde{U}_j,Z \right) \lesssim   B^{-j\frac{d}{2}} +  R_t^{\frac{1}{2}}B^{-j(s+d)} + B^{-j\frac{d}{2}}.
\end{equation*}
Observe that
\begin{equation*}
\frac{R_t^{\frac{1}{2}}B^{-j(s+d)}}{B^{-j\frac{d}{2}}} = \sqrt{R_tB^{-j(2s+d)}} \underset{j \rightarrow \infty}{\rightarrow} 0,
\end{equation*} 
so that the rate $B^{-j\frac{d}{2}}$ converges to zero slower than $R_t^{\frac{1}{2}}B^{-j(s+d)}$, which concludes the proof.
\end{proof}
\noindent We finally prove Theorem \ref{maintheoremconvergenceinlawdepoissonized}.
\begin{proof}[Proof of Theorem \ref{maintheoremconvergenceinlawdepoissonized}]
Observe that, using the triangle inequality, $$d_W\left(\widetilde{U'}_m,Z \right) \leq d_W\left(\widetilde{U'}_m,\widetilde{U}_m \right)  + d_W\left(\widetilde{U}_m,Z \right), $$ where $\widetilde{U}_m$ denotes the normalized version of the Poissonized $U$-statistic \eqref{poiustat}. Applying Lemma \ref{depoisslemma} on the first summand and Theorem \ref{maintheoremconvergenceinlaw} on the second summand concludes the proof.
\end{proof}
\section{Some interpretations and comparisons with other results}
\label{interpretationandcomparison}
\noindent Note that, since $f \in \besov$, the asymptotic behaviour of the $U$-statistic investigated in Theorem \ref{maintheoremconvergenceinlaw} attains Gaussianity in both the cases (i) and (ii). Both the results can be heuristically motivated as follows. On one hand, in the case (i), the number of sampled observations increases faster than the decay of the sample variance over $\M$, enforcing strong correlation. As a consequence, the rate depends explicitly on the effective sample variance weighted by the scaling factor. On the other hand, in the case (ii), the sample variance decreases faster than the growth of $R_t$, so that we need an additional condition to claim Gaussianity, i.e., the effective sample size goes to infinity. It means that, even if $R_t = o\bra{B^{-j\bra{2s+d}}}$, the number of observations sampled inside each pixel has to increase faster than the shrinking of the size of the pixel. In such a case, the rate of convergence to Gaussianity is provided only by the scaling factor.
\\~\\
Let us now compare our results with the ones established in \cite[Theorem I.2]{bdmp}, which prove a quantitative central limit theorem for $U$-statistics of order two built over the needlet frame in the case of a uniform density over $\sphere$. More specifically, the rate of convergence to Gaussianity of these statistics is given by the sum of three terms, namely, $\bra{R_tB^{-jd}}^{-\frac{1}{2}}$, $B^{-j\frac{d}{2}}$ and $R_t^{-\frac{1}{2}}$. As a consequence, asymptotic normality was attained, provided that $\bra{R_tB^{-jd}} \rightarrow \infty$. Note the similarities with the rates of convergence that we obtain in Theorem \ref{maintheoremconvergenceinlaw}. Indeed, the rates of convergence depend on $R_tB^{-j(2s+d)}$, $R_tB^{-jd}$, $R_t^{-\frac{1}{2}}B^{js}$ and $B^{-j\frac{d}{2}}$. 
Observe that this is analogous, in our framework, to the case $r=\infty$ and $s=0$, i.e., the largest sample case. This choice of parameters, on one hand, make the effective sample variance collapse into the effective sample size and, on the other hand, the scaling factor is annihilated by the $L^\infty$-norm of the needlets. Hence, the case (i), where it is the first chaos of the $U$-statistic  decomposition \eqref{randomtostudy} that dominates, corresponds to \cite[Theorem I.2]{bdmp}, while the case (ii) doesn't exist (the effective sample size cannot converge at the same time to infinity and zero).
In case (ii) of Theorem \ref{maintheoremconvergenceinlaw}, it is the last chaos of the $U$-statistic decomposition \eqref{randomtostudy} that dominates, which is this time analogous to the situation presented in \cite[Theorem I.2]{bdmp} where the second chaos dominates (as the order of the $U$-statistic in \cite{bdmp} is two, the last and second chaos are actually the same). The fact that the first chaos sometimes dominates is due to the presence of strong correlation between the components of the $U$-statistic.

\section{Auxiliary results}\label{sec:auxiliary}
\noindent In order to apply Theorem \ref{lachiezereypeccati}, we need the following results. 

\begin{lemma}
\label{propositionvarianceasymp}
Let $\sigma_j^2$ be the quantity defined in \eqref{defvariance}. Then it holds that 
\begin{equation}
\label{generallowerboundforsigma}
\sigma_j^2 \gtrsim \sum_{p=1}^{n}p!  R_t^{2n-p} B^{-j\left(s(2n-2p) + d(n-p-1) \right) }.
\end{equation}
\end{lemma}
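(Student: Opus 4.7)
The plan is to exploit the fact that
$$\sigma_j^2 = \sum_{p=1}^n p!\, R_t^p \|h_{j,p}\|_{L^2(\mu^p)}^2$$
is a sum of nonnegative terms, so it suffices to bound each summand from below by the corresponding term on the right-hand side of \eqref{generallowerboundforsigma} and then add the resulting $n$ inequalities. The proof thus reduces to establishing, for every $p \in \{1,\ldots,n\}$, the termwise estimate
$$\|h_{j,p}\|_{L^2(\mu^p)}^2 \gtrsim R_t^{2(n-p)} B^{-j(2(n-p)s + d(n-p-1))}.$$

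First, I would use the explicit form of $h_{j,p}$ given in \eqref{eqn:kernelred} to expand the $L^2$-norm as
$$\|h_{j,p}\|_{L^2(\mu^p)}^2 = \binom{n}{p}^{2} R_t^{2(n-p)} \sum_{k,k'=1}^{K_j} \beta_{j,k}^{n-p}\beta_{j,k'}^{n-p}\, I_{k,k'}^{p}, \qquad I_{k,k'} := \int_\M \psi_{j,k}(x)\psi_{j,k'}(x)f(x)\,dx.$$
Second, I would split this double sum into its diagonal and off-diagonal contributions. On the diagonal $k=k'$, the working assumption that $f$ is bounded and bounded away from zero, together with the $L^2$-estimate \eqref{eqn:Lpnorm}, gives $I_{k,k}$ of constant order uniformly in $k$, so that the diagonal contribution is of order $\sum_{k} \beta_{j,k}^{2(n-p)}$. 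For $k \neq k'$, the strong spatial concentration \eqref{locapropofneedlets}, combined with Lemma \ref{lemma:needloc} (applied with $q=2$) and Corollary \ref{december1963}, ensures that $|I_{k,k'}|^{p}$ enjoys a fast polynomial decay in $B^{jd}\Delta_{k,k'}$; Cauchy--Schwarz applied to the coefficients and a standard summation over pairs $k\neq k'$ then show that the off-diagonal is strictly dominated by the diagonal.

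Third, having reduced matters to $\|h_{j,p}\|_{L^2(\mu^p)}^2 \gtrsim R_t^{2(n-p)} \sum_{k} \beta_{j,k}^{2(n-p)}$, I would invoke the Besov characterization \eqref{eqn:besovcoeff} with the choice $r = 2(n-p)$ to identify $\sum_{k} \beta_{j,k}^{2(n-p)}$ with a quantity of order $B^{-j(2(n-p)s + d(n-p-1))}$. Multiplying by the remaining factor $p!\,R_t^p$ and summing over $p \in \{1,\ldots,n\}$ then yields \eqref{generallowerboundforsigma}.

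The main technical obstacle lies in the third step: relation \eqref{eqn:besovcoeff} furnishes only the $O$-type (upper) estimate $\sum_k |\beta_{j,k}|^r \lesssim B^{-j(rs+d(r-2)/2)}$, whereas the proof requires the matching lower bound. This necessitates reading the Besov hypothesis in its sharp sense, namely that the regularity of $f$ is exactly $s$ (equivalently, $f$ saturates \eqref{eqn:besovcoeff}), which is the natural and standard setup in the nonparametric context in which rates of convergence are analysed. Once this identification is made, the remaining combinatorics of exponents is a routine bookkeeping exercise.
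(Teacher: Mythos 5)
Your proposal follows essentially the same route as the paper's own proof: both start from the chaos decomposition $\sigma_j^2=\sum_{p}p!\,R_t^p\|h_{j,p}\|_{L^2(\mu^p)}^2$ and bound it termwise, reduce the double needlet sum to its diagonal using the localization machinery (Lemma \ref{lemma:needloc}, Corollary \ref{december1963}) together with the assumption that $f$ is bounded below on the pixel region, and then convert $\sum_k\beta_{j,k}^{2(n-p)}$ via the Besov estimate \eqref{eqn:besovcoeff}, with the case $p=n$ handled by the count $K_j\sim B^{jd}$. You are in fact more candid than the paper on one point: the paper also invokes \eqref{eqn:besovcoeff} as a two-sided equivalence ``$\sim$'' even though it is stated only as an $O$-type upper bound, so the sharpness (saturation) reading you flag as necessary is silently assumed there as well.
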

\begin{proof}
Recall that $\sigma_j^2$ is given by \eqref{defvariance}. For any $1 \leq p \leq n-1$, it holds that
\begin{eqnarray*}
\norm{h_{j,p}}_{L^{2}\left( \mu^p\right) }^{2} &\sim&  R_t^{2n-2p} \sum_{k_1,k_2 =1}^{K_j} \beta_{j,k_1}^{n-p}\beta_{j,k_2}^{n-p} \left\langle \psi_{j,k_1}^{\otimes p},\psi_{j,k_2}^{\otimes p} \right\rangle_{L^2\left(\mu^p \right) } \\
&\sim&  R_t^{2n-2p} \sum_{k_1,k_2 =1}^{K_j} \beta_{j,k_1}^{n-p}\beta_{j,k_2}^{n-p} \left\langle \psi_{j,k_1},\psi_{j,k_2} \right\rangle_{L^2\left(\mu \right) }^{p}.
\end{eqnarray*}
Using Corollary \ref{december1963}, one obtains
\begin{equation*}
\norm{h_{j,p}}_{L^{2}\left( \mu^p\right) }^{2}  \gtrsim   R_t^{2n-2p} \sum_{k =1}^{K_j} \beta_{j,k}^{2n-2p} \norm{\psi_{j,k}}_{L^2\left(\mu \right) }^{2p} \gtrsim   R_t^{2n-2p} \sum_{k =1}^{K_j} \beta_{j,k}^{2n-2p} \left( \int_{\mathcal{U}_j}
\psi_{j,k} (s)^2f(s)ds\right)^{p},
\end{equation*}
where $\mathcal{U}_j \subset \mathcal{M}$ is defined by
\begin{equation*}
\mathcal{U}_j := \bigcup_{k=1}^{K_j}B\left(\xi_{j,k}, B^{jd} \right),
\end{equation*}
where $B\left(\xi_{j,k}, B^{jd} \right)$ denotes the the trace on $\M$ of the closed ball with center $\xi_{j,k}$ and radius $B^{jd}$. The traces $B\left(\xi_{j,k}, B^{jd} \right)$ correspond to the pixels partitioning the manifold $\M$. Following from the localization property \eqref{locapropofneedlets}, recall that the needlet of indexes $j,k $ is not negligible only over the corresponding pixel. 
\\~\\
Observe that, for any fixed $j \geq 0$, $\mathcal{U}_j$ is a compact subset of $\mathcal{M}$ on which we assumed that the density function $f$ can be bounded from below by a positive constant. This yields
\begin{equation*}
\norm{h_{j,p}}_{L^{2}\left( \mu^p\right) }^{2}  \gtrsim   R_t^{2n-2p} \sum_{k =1}^{K_j} \beta_{j,k}^{2n-2p} \left( \int_{\mathcal{U}_j}
\psi_{j,k} (s)^2 ds\right)^{p} \gtrsim R_t^{2n-2p} \sum_{k=1}^{K_j} \beta_{j,k}^{2n-2p},
\end{equation*}
where the last equivalence is obtained by using \eqref{eqn:Lpnorm}. Exploiting the estimate \eqref{eqn:besovcoeff} yields
\begin{equation*}
 \sum_{k=1}^{K_j}
\beta_{j,k}^{2n-2p}     \sim B^{-j(2n-2p)\left( s+d\left( \frac{1}{2} - \frac{1}{2n-2p}\right) \right) },
\end{equation*}
so that
\begin{equation*}
\norm{h_{j,p}}_{L^{2}\left( \mu^p\right) }^{2} \gtrsim   R_t^{2n-2p} B^{-j(2n-2p)\left( s+d\left( \frac{1}{2} - \frac{1}{2n-2p}\right) \right) } \sim  R_t^{2n-2p} B^{-j\left(s(2n-2p) + d(n-p-1) \right) }.
\end{equation*}
Whenever $p=n$, we have $h_{j,n} = h_{j}$ and it holds that
\begin{equation*}
\norm{h_{j}}_{L^{2}\left( \mu^n\right) }^{2} \sim R_t^{n} \sum_{k_1,k_2 =1}^{K_j} \left\langle \psi_{j,k_1}^{\otimes n},\psi_{j,k_2}^{\otimes n} \right\rangle_{L^2\left(\mu^n \right) } \sim  R_t^{n} \sum_{k_1,k_2 =1}^{K_j}  \left\langle \psi_{j,k_1},\psi_{j,k_2} \right\rangle_{L^2\left(\mu \right) }^{n} \gtrsim   R_t^{n} K_j \sim R_t^{n} B^{jd},
\end{equation*}
where the last two equivalences were obtained using the same arguments as above. Gathering these estimates together yields
\begin{equation*}
\sigma_j^2 \gtrsim  \sum_{p=1}^{n}p!  R_t^{2n-p} B^{-j\left(s(2n-2p) + d(n-p-1) \right) },
\end{equation*}
as claimed.
\end{proof}

\begin{proposition}
\label{lowerboundvarianceasymp}
Let $\sigma_j^2$ be the quantity defined in \eqref{defvariance}. For $1 \leq p \leq n$, define the quantity $\Lambda_{j,p}$ by
\begin{equation*}
\Lambda_{j,p} := R_t^{2n-p} B^{-j\left(s(2n-2p) + d(n-p-1) \right) }.
\end{equation*}  
Then, exactly one of the following assertions holds:
\begin{enumerate}
\item $\Lambda_{j,p}$ is dominated by $\Lambda_{j,1}$ asymptotically for all $2 \leq p \leq n$, in which case $\sigma_j^2 \gtrsim \Lambda_{j,1}$;
\item $\Lambda_{j,p}$ is dominated by $\Lambda_{j,n}$ asymptotically for all $1 \leq p \leq n-1$, in which case $\sigma_j^2 \gtrsim \Lambda_{j,n}$;
\item $\Lambda_{j,p_1}$ is equivalent to $\Lambda_{j,p_2}$ asymptotically for all $1 \leq p_1, p_2 \leq n$, in which case $\sigma_j^2 \gtrsim \Lambda_{j,n}$;
\end{enumerate}
\end{proposition}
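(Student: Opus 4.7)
The key observation that drives the proof is that $\{\Lambda_{j,p}\}_{p=1}^n$ is, for each fixed $j$, a geometric progression in $p$ whose common ratio is independent of $p$. Indeed, a direct computation gives
\begin{equation*}
\frac{\Lambda_{j,p+1}}{\Lambda_{j,p}} = R_t^{-1} B^{j(2s+d)} = \frac{1}{R_tB^{-j(2s+d)}} =: \rho_j, \qquad 1 \leq p \leq n-1,
\end{equation*}
so that $\Lambda_{j,p} = \Lambda_{j,1} \rho_j^{p-1}$ for every $1 \leq p \leq n$. This immediately reduces the comparison of the $n$ quantities $\Lambda_{j,p}$ to the analysis of the single scalar parameter $\rho_j$, and explains the somewhat surprising fact, already pointed out in Remark \ref{remarkonwhodominates}, that the identity of the dominant chaos is controlled solely by the asymptotics of $R_tB^{-j(2s+d)}$.

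The three cases listed in the statement then correspond exactly to the three possible asymptotic regimes of $\rho_j$. If $R_tB^{-j(2s+d)} \to \infty$, then $\rho_j \to 0$, and for every $p \geq 2$ we have $\Lambda_{j,p}/\Lambda_{j,1} = \rho_j^{p-1} \to 0$, which places us in case (1). Symmetrically, if $R_tB^{-j(2s+d)} \to 0$, then $\rho_j \to \infty$, and for every $p \leq n-1$ we have $\Lambda_{j,p}/\Lambda_{j,n} = \rho_j^{p-n} \to 0$, which is case (2). Otherwise $\rho_j$ remains bounded above and below by positive constants, in which case $\Lambda_{j,p_1}/\Lambda_{j,p_2} = \rho_j^{p_1-p_2}$ is itself bounded above and below for every pair $(p_1,p_2)$, which is case (3).

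The lower bound on $\sigma_j^2$ in each case is then a direct consequence of Lemma \ref{propositionvarianceasymp}, which yields $\sigma_j^2 \gtrsim \sum_{p=1}^n \Lambda_{j,p}$. In case (1), the sum is asymptotically of the same order as $\Lambda_{j,1}$, giving $\sigma_j^2 \gtrsim \Lambda_{j,1}$; in cases (2) and (3), the sum is asymptotically of the same order as $\Lambda_{j,n}$, giving $\sigma_j^2 \gtrsim \Lambda_{j,n}$. There is no substantial obstacle here once the geometric progression structure has been identified; the only real conceptual content lies in the observation that the common ratio $\rho_j$ is independent of $p$, which is what rules out any intermediate dominant index and forces the trichotomy to hold, yielding the statement of independent interest emphasized by the authors.
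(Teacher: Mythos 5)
Your proof is correct and takes essentially the same route as the paper: the paper's argument also pivots on the identity $\Lambda_{j,p_1}\Lambda_{j,p_2}^{-1} = \left( R_t B^{-j(2s+d)}\right)^{p_2-p_1}$ (your geometric-progression observation with ratio $\rho_j$ independent of $p$) and then invokes Lemma \ref{propositionvarianceasymp} for the variance lower bound. The only difference is organizational: the paper argues by contradiction, showing that a unique dominant index $q$ with $1<q<n$ would force $R_tB^{-j(2s+d)}$ to tend to $0$ and $\infty$ simultaneously, whereas you obtain the trichotomy directly from the three asymptotic regimes of $\rho_j$, which is a slightly cleaner presentation of the same idea.
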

\begin{proof}
Assume that all the $\Lambda_{j,p}$, $1 \leq p \leq n$ are asymptotically equivalent. Then assertions (1) and (2) must be false and using \eqref{generallowerboundforsigma} yields the lower bound $\sigma_j^2 \geq \Lambda_{j,n}$. Assume now that not all the $\Lambda_{j,p}$, $1 \leq p \leq n$ are asymptotically equivalent. Then, in the case where $n = 2$, it is clear that either assertion (1) or (2) holds but not both, and the corresponding lower bounds on $\sigma_j^{2}$ hold by \eqref{generallowerboundforsigma}. In the case where $n > 2$, observe that for any  $1 \leq p_1, p_2 \leq n$, it holds that $\Lambda_{j,p_1}\Lambda_{j,p_2}^{-1} = \left( R_t B^{-j(2s+d)}\right)^{p_2 - p_1}$. So if there exist $1 \leq p_1 \neq p_2 \leq n$ such that $\Lambda_{j,p_1} = O\left( \Lambda_{j,p_2}\right) $, then it follows that $R_t B^{-j(2s+d)} = O(1)$, which in turn implies assertion (3), which is impossible if not all the $\Lambda_{j,p}$, $1 \leq p \leq n$ are asymptotically equivalent. So there exists exactly one $1 \leq q \leq n$ such that $\Lambda_{j,p} = o\left( \Lambda_{j,q}\right) $ for all $1 \leq p \leq n$ such that $p \neq q$. Assume that $1 < q < n$. Then for all $1 \leq p \leq n$ such that $p \neq q$, it holds that $\Lambda_{j,p}\Lambda_{j,q}^{-1} = \left( R_t B^{-j(2s+d)}\right)^{q-p} \rightarrow 0$ as $j \rightarrow \infty$. In particular, $\Lambda_{j,1}\Lambda_{j,q}^{-1} = \left( R_t B^{-j(2s+d)}\right)^{q-1} \rightarrow 0$ and $\Lambda_{j,n}\Lambda_{j,q}^{-1} = \left( R_t B^{-j(2s+d)}\right)^{q-n} \rightarrow 0$. As $q-1 >0$ and $q-n <0$, this implies that $R_t B^{-j(2s+d)} \rightarrow 0$ and $R_t B^{-j(2s+d)} \rightarrow \infty$ simultaneously as $ j\rightarrow \infty$, which is impossible. The only possibilities that remain are either assertion (1) or assertion (2).
\end{proof}
\begin{remark}
\label{remarkafterproofofpropchaoses}
Observe that in Proposition \ref{lowerboundvarianceasymp}, for any $1 \leq p \leq n$, $\Lambda_{j,p}$ corresponds to the normalization (up to constants) of the $p$-th chaos term in \eqref{randomtostudy}. It follows from Proposition \ref{lowerboundvarianceasymp} that either the first chaos term $I_{1}\left(\tilde{h}_{j,1} \right)$, the highest chaos term $I_{n}\left(\tilde{h}_{j,n} \right)$ or all chaos terms contribute to the limit of \eqref{randomtostudy} as $j \rightarrow \infty$.
\end{remark}

\begin{proposition}
\label{propocontractions}
Let the above notation prevail. For all $1 \leq p \leq q \leq n$, $1 \leq r \leq p$ and $1 \leq \ell \leq r \wedge (q-1)$, it holds that 
\begin{equation*}
\norm{\tilde{h}_{j,p}\star_r^\ll \tilde{h}_{j,q}}_{L^2\left( \mu_t^{p+q -r-\ell}\right) }^{2} \lesssim \sigma_{j}^{-4} R_t^{4n-p-q-r +\ll}B^{-j\left(s(4n-2p-2q) + d(2n -p-q -r +\ell -1) \right) }.
\end{equation*}
\end{proposition}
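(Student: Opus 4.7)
The plan is to compute $h_{j,p}\star_r^\ell h_{j,q}$ explicitly from Definition \ref{def:starcontraction}, expand the squared $L^2(\mu_t^{p+q-r-\ell})$-norm into a fourfold needlet sum, reduce to the diagonal via the spatial localization property of needlets, and conclude with the Besov coefficient estimate \eqref{eqn:besovcoeff}. Normalization by $\sigma_j^{-4}$ is then immediate since the star-contraction is bilinear in its arguments.

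Substituting the explicit form \eqref{eqn:kernelred} of $h_{j,p}$ and $h_{j,q}$ into Definition \ref{def:starcontraction} and evaluating the $\ell$-fold integral over the identified variables via
\[
\int_{\M^\ell}\psi_{j,k_1}^{\otimes\ell}(z)\,\psi_{j,k_2}^{\otimes\ell}(z)\,\mu_t^{\otimes\ell}(dz)=R_t^\ell\langle\psi_{j,k_1},\psi_{j,k_2}\rangle_{L^2(\mu)}^\ell
\]
gives
\begin{align*}
(h_{j,p}\star_r^\ell h_{j,q})(\mathbf t,\mathbf u,\mathbf s)
&=\binom{n}{p}\binom{n}{q}R_t^{2n-p-q+\ell}\sum_{k_1,k_2=1}^{K_j}\beta_{j,k_1}^{n-p}\beta_{j,k_2}^{n-q}\langle\psi_{j,k_1},\psi_{j,k_2}\rangle_{L^2(\mu)}^\ell\\
&\qquad\times\psi_{j,k_1}^{\otimes(p-r)}(\mathbf t)\,\psi_{j,k_2}^{\otimes(q-r)}(\mathbf s)\,\psi_{j,k_1}^{\otimes(r-\ell)}(\mathbf u)\,\psi_{j,k_2}^{\otimes(r-\ell)}(\mathbf u).
\end{align*}
Squaring and integrating against $\mu_t^{\otimes(p+q-r-\ell)}$ then produces a fourfold sum over $k_1,k_2,k_3,k_4$ whose generic summand is
\[
\beta_{j,k_1}^{n-p}\beta_{j,k_2}^{n-q}\beta_{j,k_3}^{n-p}\beta_{j,k_4}^{n-q}\,\langle\psi_{j,k_1},\psi_{j,k_2}\rangle^\ell\langle\psi_{j,k_3},\psi_{j,k_4}\rangle^\ell\langle\psi_{j,k_1},\psi_{j,k_3}\rangle^{p-r}\langle\psi_{j,k_2},\psi_{j,k_4}\rangle^{q-r}\,\Psi_{k_1 k_2 k_3 k_4}^{r-\ell},
\]
where all inner products are in $L^2(\mu)$ and $\Psi_{k_1 k_2 k_3 k_4}:=\int_\M\psi_{j,k_1}\psi_{j,k_2}\psi_{j,k_3}\psi_{j,k_4}\,f\,dx$. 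A direct tally of the $R_t$ powers, combining the $2(2n-p-q+\ell)$ from squaring with $p-r$ from the $\mathbf t$-integration, $q-r$ from the $\mathbf s$-integration and $r-\ell$ from the $\mathbf u$-integration, yields exactly the prefactor $R_t^{4n-p-q-r+\ell}$ announced in the statement.

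The critical step is the reduction of this fourfold sum to its diagonal $k_1=k_2=k_3=k_4=k$. Invoking the pointwise localization \eqref{locapropofneedlets}, Lemma \ref{lemma:needloc} and Corollary \ref{december1963} (the latter applied iteratively to both the pairwise inner products and the four-needlet integral $\Psi_{k_1 k_2 k_3 k_4}$), the off-diagonal contributions decay rapidly in pairwise geodesic distance between the cubature points and are therefore asymptotically negligible. On the diagonal, using \eqref{eqn:Lpnorm} together with the boundedness of $f$ implied by $f\in\besov$ with $s\geq d/r$, one has $\langle\psi_{j,k},\psi_{j,k}\rangle_{L^2(\mu)}\sim 1$ and $\Psi_{k,k,k,k}\lesssim\norm{\psi_{j,k}}_{L^{4}(\M)}^{4}\sim B^{jd}$, so the remaining sum is controlled by $B^{jd(r-\ell)}\sum_{k=1}^{K_j}\beta_{j,k}^{4n-2p-2q}$. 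Applying \eqref{eqn:besovcoeff} with exponent $4n-2p-2q$ (and noting that in the degenerate case $p=q=n$ the sum collapses to $K_j\sim B^{jd}$, consistent with the target formula) gives $\sum_k\beta_{j,k}^{4n-2p-2q}\lesssim B^{-j(s(4n-2p-2q)+d(2n-p-q-1))}$. Multiplying out yields the exponent $-j(s(4n-2p-2q)+d(2n-p-q-r+\ell-1))$, and dividing by $\sigma_j^4$ concludes.

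The main obstacle is precisely the diagonal reduction. The pairwise inner products $\langle\psi_{j,k_i},\psi_{j,k_j}\rangle_{L^2(\mu)}$ concentrate on $k_i=k_j$ from a single application of \eqref{locapropofneedlets}, but the four-needlet integral $\Psi_{k_1 k_2 k_3 k_4}$ couples all four indices simultaneously with the surrounding inner-product factors, so one must appeal to Lemma \ref{lemma:needloc} to ensure that only quadruples whose cubature centers cluster around a single point contribute at leading order. This is a genuine multi-index refinement of the argument underlying Corollary \ref{december1963}, and handling it carefully is where most of the technical work lies.
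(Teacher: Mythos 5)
Your proposal is correct and follows essentially the same route as the paper's own proof: explicit computation of the contraction from \eqref{eqn:kernelred}, expansion of the squared norm into the fourfold needlet sum with the factors $\langle\psi_{j,k_1},\psi_{j,k_3}\rangle^{p-r}\langle\psi_{j,k_2},\psi_{j,k_4}\rangle^{q-r}\langle\psi_{j,k_1},\psi_{j,k_2}\rangle^{\ell}\langle\psi_{j,k_3},\psi_{j,k_4}\rangle^{\ell}\langle\psi_{j,k_1}\psi_{j,k_2},\psi_{j,k_3}\psi_{j,k_4}\rangle^{r-\ell}$, diagonal reduction via the localization results (Corollary \ref{december1963}), the bounds $\norm{\psi_{j,k}}_{L^2(\mu)}\lesssim 1$ and $\norm{\psi_{j,k}}_{L^4(\mu)}^{4(r-\ell)}\lesssim B^{jd(r-\ell)}$, and finally \eqref{eqn:besovcoeff}, with the case $p=q=n$ treated via $K_j\sim B^{jd}$ exactly as in the paper. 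Your power counting of $R_t$ and the final exponent both check out, and your explicit acknowledgment that the off-diagonal control of the four-needlet term requires a multi-index extension of Corollary \ref{december1963} is, if anything, more candid than the paper's bare "repeatedly."
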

\begin{proof}
\noindent Using \eqref{eqn:kernelred} to compute the contractions $\tilde{h}_{j,p} \star_{r}^{\ell} \tilde{h}_{j,q}$ for all $1 \leq p < q \leq n$, $1 \leq r \leq p$ and $1 \leq \ell \leq r $ yields
\begin{eqnarray*}
&& \tilde{h}_{j,p}\star_r^\ll \tilde{h}_{j,q}\left(x_1,\ldots , x_{p-r},\gamma_1, \ldots , \gamma_{r-\ell},y_1,\ldots, y_{q-r} \right)  = \sigma_{j}^{-2}\sum_{k_1,k_2 =1}^{K_j}
\binom{n}{p}\binom{n}{q}R_t^{2n-p-q+\ll}\beta_{j,k_1}^{n-p}\beta_{j,k_2}^{n-q} \\
&& \qquad\qquad  \psi_{j,k_1}^{\otimes (p-r)}\bra{x_1, \ldots ,x_{p-r}}\psi_{j,k_2}^{\otimes (q-r)}\bra{y_1, \ldots ,y_{q-r}} \left[ \psi_{j,k_1}^{\otimes r- \ell}\psi_{j,k_2}^{\otimes r- \ell}\right] \bra{\gamma_1,\ldots,\gamma_{r-\ll}} \left\langle \psi_{j,k_1},\psi_{j,k_2} \right\rangle_{L^2\left(\mu \right) }^\ll.
\end{eqnarray*}
Based on that expression, one can deduce the following estimate
\begin{eqnarray*}
&& \norm{\tilde{h}_{j,p}\star_r^\ll \tilde{h}_{j,q}}_{L^2\left( \mu_t^{p+q -r-\ell}\right) }^{2} = R_t^{p+q-r-\ell} \int_{\M^{p+q-r-\ell}}\left( \tilde{h}_{j,p}\star_r^\ll \tilde{h}_{j,q}\right)^2 d\mu^{\otimes (p+q-r-\ell)} \\
 && \qquad\qquad\qquad \sim    \sigma_{j}^{-4} R_t^{4n-p-q-r +\ll}\sum_{k_1,k_2,k_3,k_4 =1}^{K_j}
\beta_{j,k_1}^{n-p}\beta_{j,k_2}^{n-q}\beta_{j,k_3}^{n-p}\beta_{j,k_4}^{n-q} \left\langle \psi_{j,k_1},\psi_{j,k_3} \right\rangle_{L^2\left(\mu \right) }^{p-r}   \left\langle \psi_{j,k_2},\psi_{j,k_4} \right\rangle_{L^2\left(\mu \right) }^{q-r} \\
&& \qquad \qquad \qquad\qquad\qquad\qquad\qquad\qquad\quad \left\langle \psi_{j,k_1},\psi_{j,k_2} \right\rangle_{L^2\left(\mu \right) }^{\ell} \left\langle \psi_{j,k_3},\psi_{j,k_4} \right\rangle_{L^2\left(\mu \right) }^{\ell} \left\langle \psi_{j,k_1}\psi_{j,k_2},\psi_{j,k_3}\psi_{j,k_4} \right\rangle_{L^2\left(\mu \right) }^{r-\ell}.
\end{eqnarray*}
Using the asymptotic equivalence relation in Corollary \ref{december1963} repeatedly yields
\begin{equation}
\label{firstestimateofcontnorm}
\norm{\tilde{h}_{j,p}\star_r^\ll \tilde{h}_{j,q}}_{L^2\left( \mu_t^{p+q -r-\ell}\right) }^{2}  \sim  \sigma_{j}^{-4} R_t^{4n-p-q-r +\ll}\sum_{k=1}^{K_j}
\beta_{j,k}^{4n-2p-2q} \norm{\psi_{j,k}}_{L^2\left(\mu \right) }^{2p+2q-4r-4\ell} \norm{\psi_{j,k}}_{L^4\left(\mu \right) }^{4(r-\ell)}.
\end{equation}
Using the fact that the density function $f$ is bounded above along with the norm estimates \eqref{eqn:Lpnorm} allows one to write $\norm{\psi_{j,k}}_{L^2\left(\mu \right) }^{2p+2q-4r-4\ell} \leq \norm{\psi_{j,k}}_{L^2\left(\M \right) }^{2p+2q-4r-4\ell} \lesssim 1$ and $\norm{\psi_{j,k}}_{L^4\left(\mu \right) }^{4(r-\ell)} \leq \norm{\psi_{j,k}}_{L^4\left(\M \right) }^{4(r-\ell)} \lesssim B^{jd(r-\ell)}$. Combining these two estimates together in \eqref{firstestimateofcontnorm} yields
\begin{equation}
\label{estimateusedforthencase}
\norm{h_{j,p}\star_r^\ll h_{j,q}}_{L^2\left( \mu_t^{p+q -r-\ell}\right) }^{2}  \lesssim  \sigma_{j}^{-4} R_t^{4n-p-q-r +\ll}B^{jd(r-\ell)}\sum_{k=1}^{K_j}
\beta_{j,k}^{4n-2p-2q}.
\end{equation}
Exploiting the estimate \eqref{eqn:besovcoeff} yields, for any $1 \leq p \leq q \leq n$ such that $p+q \leq 2n-1$,
\begin{equation*}
\label{estimbeta2nminus2p}
\sum_{k=1}^{K_j}
\beta_{j,k}^{4n -2p -2q} \sim B^{-j(4n -2p -2q)\left( s+d\left( \frac{1}{2} - \frac{1}{4n -2p -2q}\right) \right) },
\end{equation*}
so that, for any $1 \leq p \leq q \leq n$ such that $p+q \leq 2n-1$,
\begin{equation*}
\norm{\tilde{h}_{j,p}\star_r^\ll \tilde{h}_{j,q}}_{L^2\left( \mu_t^{p+q -r-\ell}\right) }^{2}  \lesssim  \sigma_{j}^{-4} R_t^{4n-p-q-r +\ll}B^{-j\left(s(4n-2p-2q) + d(2n -p-q -r +\ell -1) \right) }.
\end{equation*}
In the case where $p=q=n$ (in which case $\tilde{h}_{j,p} = \tilde{h}_{j,q} = \tilde{h}_{j,n}=\tilde{h}_{j}$), recalling that $K_j \sim B^{jd}$ and using  \eqref{estimateusedforthencase} implies that
\begin{equation*}
\norm{\tilde{h}_{j,n}\star_r^\ll \tilde{h}_{j,n}}_{L^2\left( \mu_t^{2n -r-\ell}\right) }^{2}  \lesssim  \sigma_{j}^{-4}K_j R_t^{2n-r +\ll} B^{-jd(-r+\ell)} \sim \sigma_{j}^{-4} R_t^{2n-r +\ll}  B^{-jd(-r+\ell -1)},
\end{equation*}
which concludes the proof.
\end{proof}

\begin{proposition}
\label{proponormL4}
Let the above notation prevail. For all $1 \leq p \leq n$, it holds that 
\begin{equation*}
\norm{\tilde{h}_{j,p}}_{L^{4}\left( \mu_t^p\right) }^{4} \lesssim \sigma_{j}^{-4} R_t^{4n-3p} B^{-j\left(s(4n-4p) + d(2n -3p -1) \right) }.
\end{equation*}
\end{proposition}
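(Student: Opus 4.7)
The plan is to mimic the approach used in the proof of Proposition \ref{propocontractions}, starting from the explicit expression \eqref{eqn:kernelred} for the reduced kernel $h_{j,p}$ and reducing a multi-index sum to a diagonal sum via the needlet localization.

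First I would expand $h_{j,p}^4$ as a quadruple sum over indices $k_1, k_2, k_3, k_4 \in \{1, \ldots, K_j\}$, keeping track of the factor $R_t^{4(n-p)}$ and the product of four needlet coefficients $\beta_{j,k_1}^{n-p}\beta_{j,k_2}^{n-p}\beta_{j,k_3}^{n-p}\beta_{j,k_4}^{n-p}$. Then I would compute
\begin{equation*}
\norm{h_{j,p}}_{L^4(\mu_t^p)}^4 = R_t^p\int_{\M^p}h_{j,p}^4\, d\mu^{\otimes p},
\end{equation*}
and use Fubini (the integrand factors over the $p$ coordinates) to rewrite this as a quadruple sum whose summand contains the factor
\begin{equation*}
\left(\int_{\M}\psi_{j,k_1}(x)\psi_{j,k_2}(x)\psi_{j,k_3}(x)\psi_{j,k_4}(x)\, f(x)\,dx\right)^{p}.
\end{equation*}
Collecting all the $R_t$ factors yields the prefactor $R_t^{4n-3p}$.

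Next I would apply Corollary \ref{december1963} with $q=4$, exploiting that $f$ is bounded above, to collapse the quadruple sum to a single diagonal sum at the cost of an asymptotic constant. This is the key step and the main technical obstacle, since one needs to verify that the off-diagonal contributions from distinct $k_i$'s are negligible compared to the diagonal $k_1 = k_2 = k_3 = k_4$ term, which is exactly what the spatial localization estimate in Lemma \ref{lemma:needloc} provides. After this reduction the bound becomes
\begin{equation*}
\norm{h_{j,p}}_{L^4(\mu_t^p)}^4 \lesssim R_t^{4n-3p}\sum_{k=1}^{K_j}\beta_{j,k}^{4(n-p)}\norm{\psi_{j,k}}_{L^4(\M)}^{4p}.
\end{equation*}

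Finally, I would estimate the two remaining factors: by \eqref{eqn:Lpnorm}, $\norm{\psi_{j,k}}_{L^4(\M)}^{4p} \lesssim B^{jdp}$, and by the Besov coefficient estimate \eqref{eqn:besovcoeff} (when $1 \leq p \leq n-1$) one has $\sum_{k}\beta_{j,k}^{4(n-p)} \sim B^{-j(4s(n-p)+2d(n-p)-d)}$. A direct algebraic check shows that $-4s(n-p) - 2d(n-p) + d + dp = -s(4n-4p) - d(2n-3p-1)$, which is precisely the exponent claimed. For the boundary case $p = n$, the sum $\sum_k 1$ reduces to $K_j \sim B^{jd}$ and the estimate $\norm{\psi_{j,k}}_{L^4(\M)}^{4n}\sim B^{jdn}$ produces the same exponent $B^{jd(n+1)} = B^{-j d(2n-3n-1)}$, so the formula remains valid. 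Dividing by $\sigma_j^4$ to account for the normalization $\tilde h_{j,p} = \sigma_j^{-1}h_{j,p}$ then gives the stated bound. The only real subtlety is the legitimate application of Corollary \ref{december1963} (i.e., that the exponent $q = 4$ is even and that $f$ is bounded, which ensures we can pass from $\mu$ to $dx$ and invoke the localization); the rest is routine bookkeeping of $R_t$ and $B^{j}$ powers.
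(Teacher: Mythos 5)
Your proposal is correct and follows essentially the same route as the paper's own proof: expanding $h_{j,p}^4$ into a quadruple sum, factoring the integral over the $p$ coordinates to obtain $\left\langle \psi_{j,k_1}\psi_{j,k_2},\psi_{j,k_3}\psi_{j,k_4}\right\rangle_{L^2(\mu)}^p$, collapsing to the diagonal via Corollary \ref{december1963}, and then invoking the norm estimates \eqref{eqn:Lpnorm} and the Besov bound \eqref{eqn:besovcoeff}, with the $p=n$ case handled through $K_j \sim B^{jd}$. Your bookkeeping of the $R_t$ and $B^{j}$ exponents, including the check that the $p=n$ boundary case yields the same formula, matches the paper's computation exactly.
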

\begin{proof}
Using \eqref{eqn:kernelred} and the fact that $f$ is bounded from above, one can write
\begin{eqnarray*}
\norm{\tilde{h}_{j,p}}_{L^{4}\left( \mu_t^p\right) }^{4} &\sim  & \sigma_{j}^{-4}R_{t}^{p}\int_{\M^p}h_{j,p}^4 d\mu^{\otimes p}\\
&\sim & \sigma_{j}^{-4} R_{t}^{p} R_t^{4n-4p} \sum_{k_1,k_2,k_3,k_4 =1}^{K_j}\beta_{j,k_1}^{n-p}\beta_{j,k_2}^{n-p}\beta_{j,k_3}^{n-p}\beta_{j,k_4}^{n-p} \left\langle \psi_{j,k_1}^{\otimes p}\psi_{j,k_2}^{\otimes p}, \psi_{j,k_3}^{\otimes p}\psi_{j,k_4}^{\otimes p} \right\rangle_{L^2\left(\mu^p \right) }\\
& \sim &  \sigma_{j}^{-4} R_t^{4n-3p} \sum_{k_1,k_2,k_3,k_4 =1}^{K_j} \beta_{j,k_1}^{n-p}\beta_{j,k_2}^{n-p}\beta_{j,k_3}^{n-p}\beta_{j,k_4}^{n-p}  \left\langle \psi_{j,k_1}\psi_{j,k_2}, \psi_{j,k_3}\psi_{j,k_4}\right\rangle_{L^2\left(\mu \right) }^p.
\end{eqnarray*}
Using Corollary \ref{december1963} 
and the norm estimates \eqref{eqn:Lpnorm}, we obtain
\begin{equation}
\label{estim2onthel4norm}
\norm{\tilde{h}_{j,p}}_{L^{4}\left( \mu_t^p\right) }^{4} \sim \sigma_{j}^{-4} R_t^{4n-3p} \sum_{k =1}^{K_j} \beta_{j,k}^{4n-4p} \norm{\psi_{j,k}}_{L^4\left(\mu \right) }^{4p} \lesssim \sigma_{j}^{-4} R_t^{4n-3p}B^{jdp} \sum_{k =1}^{K_j} \beta_{j,k}^{4n-4p}.
\end{equation}
Exploiting the estimate \eqref{eqn:besovcoeff} yields, for $1 \leq p \leq n-1$,
\begin{equation*}
\sum_{k=1}^{K_j}
\beta_{j,k}^{n-p} \sim B^{-j(4n-4p)\left( s+d\left( \frac{1}{2} - \frac{1}{4n-4p}\right) \right) },
\end{equation*}
so that, for $1 \leq p \leq n-1$,
\begin{equation*}
\norm{\tilde{h}_{j,p}}_{L^{4}\left( \mu_t^p\right) }^{4} \lesssim   \sigma_{j}^{-4} R_t^{4n-3p}B^{jdp} B^{-j\left(s(4n-4p) + d(2n -2p -1) \right) } \sim \sigma_{j}^{-4} R_t^{4n-3p} B^{-j\left(s(4n-4p) + d(2n -3p -1) \right) }.
\end{equation*}
In the case where $p=n$ (in which case $\tilde{h}_{j,n}=\tilde{h}_{j}$), we have directly from \eqref{estim2onthel4norm} that
\begin{equation*}
\norm{\tilde{h}_{j}}_{L^{4}\left( \mu_t^n\right) }^{4} \lesssim   \sigma_{j}^{-4} R_t^{n}K_j B^{jdn} \sim \sigma_{j}^{-4} R_t^{n} B^{jd(n+1)},
\end{equation*}
which concludes the proof.
\end{proof}
\begin{acknow*}
The authors wish to thank Domenico Marinucci for insightful discussions and remarks, as well as for the partial support provided by the ERC grant Pascal n. 277742.
\end{acknow*}
\bibliography{bibliografia}
\end{document}